\documentclass[12pt]{amsart}

\usepackage[a4paper,margin=28mm]{geometry}
\usepackage{amsmath,amssymb,amsthm,mathtools,esint}
\usepackage{microtype}
\usepackage[colorlinks=true,linkcolor=blue,citecolor=blue,urlcolor=blue]{hyperref}

\numberwithin{equation}{section}
\newtheorem{theorem}{Theorem}[section]
\newtheorem{proposition}[theorem]{Proposition}
\newtheorem{lemma}[theorem]{Lemma}
\newtheorem{corollary}[theorem]{Corollary}
\theoremstyle{remark}
\newtheorem{remark}[theorem]{Remark}

\DeclareMathOperator{\Ric}{Ric}
\DeclareMathOperator{\divg}{div}
\DeclareMathOperator{\Vol}{Vol}
\DeclareMathOperator{\osc}{osc}
\newcommand{\dd}{\,dV_g}
\newcommand{\R}{\mathbb R}
\newcommand{\ip}[2]{\langle #1,#2\rangle}

\title[Exponentially harmonic functions]
{Gradient estimates and Liouville theorems\\
for exponentially harmonic functions}
\author{Xishen Jin }
	\address{Xishen Jin\\ Department of Mathematics\\ Renmin University of China \\ Beijing\\ 100872\\ China} \email{jinxishen@ruc.edu.cn}
\author{Xi Zhang}
\address{School of Mathematics and Statistics,
Nanjing University of Science and Technology,
Nanjing 210094, P.R. China}
\email{mathzx@njust.edu.cn}
\date{}
\keywords{Exponentially harmonic function, gradient estimate,
Ricci curvature, Liouville theorem}

\begin{document}

\begin{abstract}
We study exponentially harmonic functions on complete Riemannian
manifolds. Under a lower Ricci curvature bound \(\Ric_g\geq-Kg\), we first establish a local
gradient estimate in terms of the oscillation. We then prove a
pointwise gradient estimate for solutions bounded from one side. As an application, when \(\Ric_g\geq0\), we obtain a Liouville
theorem under a one-sided sublinear growth condition. In particular,
every exponentially harmonic function bounded above or below is
constant.
\end{abstract}

\maketitle
\enlargethispage{2pt}

\section{Introduction}

Let \((M^n,g)\) be a complete connected Riemannian manifold without
boundary, where \(n\geq2\). For any relatively compact domain
\(\Omega\subset M\), we define the exponential energy
\begin{equation}\label{eq:energy-functional}
E_\Omega(u)=\int_\Omega\left(e^{|\nabla u|^2/2}-1\right)\dd.
\end{equation}
A real-valued function \(u\) is called exponentially harmonic if it
satisfies the Euler--Lagrange equation associated with $E_{\Omega}$
\begin{equation}\label{eq:eh}
\divg\!\left(e^{|\nabla u|^2/2}\nabla u\right)=0.
\end{equation}
For \(u\in C^2(\Omega)\), this is equivalent to a quasilinear equation
\begin{equation}\label{eq:nondiv}
\Delta u+\nabla^2u(\nabla u,\nabla u)=0.
\end{equation}

Gradient estimates are fundamental in the theory of harmonic functions. Yau \cite{Yau} proved that every positive harmonic function on a complete manifold with nonnegative Ricci curvature is constant. The local gradient estimate of Cheng-Yau \cite{CY}
gives a quantitative form of this result. If \(v>0\) is harmonic on
\(B_{2R}(x_0)\) and \(\Ric_g\geq-Kg\), then
\begin{equation}\label{eq:harmonic-background}
\sup_{B_R(x_0)}\frac{|\nabla v|}{v}
\leq C_n\left(\frac1R+\sqrt K\right).
\end{equation}
Integration along curves yields a local Harnack inequality. Applying
\eqref{eq:harmonic-background} to \(u-c+\varepsilon\) also gives a
gradient estimate when \(u\geq c\). It follows that a global harmonic function bounded from one
side is constant under nonnegative Ricci curvature. The same
estimate also implies constancy under one-sided sublinear growth.

Corresponding results also hold for \(p\)-harmonic functions, which are weak solutions of
\[
\Delta_p v:=\divg\!\left(|\nabla v|^{p-2}\nabla v\right)=0,
\qquad 1<p<\infty.
\]
Rigoli-Salvatori-Vignati \cite{RSV} proved Liouville-type results
for \(p\)-harmonic functions under volume doubling and Poincar\'e
inequalities. Their results include the constancy of one-sided bounded functions on complete manifolds with nonnegative Ricci curvature. Kotschwar-Ni \cite{KN} obtained local gradient
estimates under a lower bound on sectional curvature. Wang-Zhang \cite{WZ} proved the estimate under a Ricci lower bound.
For a positive \(p\)-harmonic function on \(B_{2R}(x_0)\), their
result gives
\begin{equation}\label{eq:p-harmonic-background}
\sup_{B_R(x_0)}\frac{|\nabla v|}{v}
\leq C_{n,p}\left(\frac1R+\sqrt K\right).
\end{equation}
The key tools in their proof are Moser iteration and a local Sobolev inequality.
As in the harmonic case, this implies a Harnack inequality and
Liouville theorems under one-sided bounds or one-sided sublinear
growth when \(K=0\). Sung-Wang \cite{SW} obtained sharp gradient
estimates for positive eigenfunctions of the \(p\)-Laplacian under
a lower Ricci bound and also studied the associated rigidity
problem.

Exponentially harmonic maps were first studied by Eells-Lemaire
\cite{EL}. Duc-Eells \cite{DE} considered regularity questions
for exponentially harmonic functions. Lieberman \cite{Lieberman}
proved regularity for minimizers of the exponential energy.
More recently, Marcellini-Nastasi-Pacchiano Camacho \cite{MNP}
obtained local Lipschitz estimates for minimizers under general
growth assumptions, covering both \(p,q\)-growth and
exponential growth.

Regarding Liouville theorems, Hong \cite{Hong} proved that bounded
exponentially harmonic functions are constant on complete manifolds
with nonnegative sectional curvature. Wu-Ruan-Yang \cite{WRY}
proved a gradient estimate via the maximum principle and obtained
a Liouville theorem under nonnegative Ricci curvature together with
a lower bound on the sectional curvature.
In Euclidean domains, Moreira-Santos-Monari Soares \cite{MSS24}
proved a quantitative Hopf--Oleinik lemma and an interior gradient
estimate for \(\Delta v+2\nabla^2v(\nabla v,\nabla v)=0\), which agrees with \eqref{eq:nondiv} after setting
\(u=\sqrt2\,v\).
In \cite{MSS25}, they proved a classification theorem in half-spaces and showed that entire solutions on \(\R^n\) with at most
linear growth are affine. Their Liouville corollary
assumes a linear growth bound.

In this paper, we prove local gradient estimates under the sole
curvature assumption \(\Ric_g\geq-Kg\). We first estimate the gradient in terms of the oscillation. For a bounded function on a set \(U\), write
\[
\osc_Uu=\sup_Uu-\inf_Uu.
\]
Our first result is the following.

\begin{theorem}\label{thm:gradient}
Let \((M^n,g)\) be a complete connected Riemannian manifold without boundary with
\(
\Ric_g\geq-Kg\) for some \(K\geq0.
\)
Let \(p\in M\) and \(R>0\). If
\(u\in C^3(B_{4R}(p))\cap L^\infty(B_{4R}(p))\)
is exponentially harmonic, then
\begin{equation}\label{eq:gradient}
\sup_{B_R(p)}|\nabla u|
\leq C_n\left(\frac1R+\sqrt K\right)
        \osc_{B_{4R}(p)}u,
\end{equation}
where \(C_n\) depends only on \(n\).
\end{theorem}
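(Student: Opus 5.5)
By scale invariance of the curvature condition I will reduce to a normalized setting. The equation itself is not scale invariant: replacing \(u\) by \(\lambda u\) changes the nonlinearity. So I will keep \(u\) fixed and work with the quantity \(w=|\nabla u|^2\), or better \(f=e^{w/2}\) or \(\phi(w)\). The plan is a maximum-principle (Bernstein/Cheng--Yau type) argument for an auxiliary function combining \(w\) and \(u\), localized with a cutoff. The natural linearized operator for \eqref{eq:nondiv} is
\[
Lh=\Delta h+\nabla^2h(\nabla u,\nabla u),
\]
with coefficient matrix \(a^{ij}=g^{ij}+u^iu^j\). Its ellipticity ratio is \(1+w\), so it degenerates as \(w\to\infty\); that is the source of difficulty.

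\textbf{Step 1 (Bochner-type inequality).} Differentiate \eqref{eq:nondiv} in the direction \(\nabla u\) and use the Bochner formula with \(\Ric\geq-K\). The goal is an inequality of the form
\[
Lw\geq 2\,a^{ij}a^{kl}u_{ik}u_{jl}\,(\text{roughly}) -2K w(1+w) + (\text{first-order terms in }\nabla w),
\]
with the first-order terms absorbed into a drift \(\ip{b}{\nabla w}\). The key refinement is a Kato/Cheng--Yau-type improvement. Since \(\Delta u=-\nabla^2u(\nabla u,\nabla u)\), the Hessian contracted with \(a\) is trace-free in the \(a\)-metric. This should give
\[
a^{ij}a^{kl}u_{ik}u_{jl}\geq \Bigl(1+\tfrac1{n-1}\Bigr)\frac{|\nabla w|_a^2}{4w}-\dots
\]
Because \(w\) appears multiplied by \(1+w\) in the curvature term, I would pass to \(\psi=\log(1+w)\) or to \(e^{w/2}\). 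The purpose is to make the curvature contribution of order \(K\) uniformly.

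\textbf{Step 2 (auxiliary function).} Normalize \(\inf_{B_{4R}}u=0\) and set \(M=\osc u\). I would consider a Cheng--Yau/Korevaar-type function
\[
G=\eta\,\varphi(u)\,w,\qquad \varphi(u)=\frac{1}{(2M-u)}\ \text{or}\ \varphi(u)=1+\tfrac{u}{M}.
\]
Here \(\eta\) is a cutoff built from the distance function, with \(|\nabla\eta|^2/\eta\leq C/R^2\). By the Laplacian comparison theorem, \(L\eta\geq -C(1+w)(1/R^2+\sqrt K/R)\), interpreted in the barrier sense via Calabi's trick. At an interior maximum of \(G\), I use \(\nabla G=0\) and \(LG\leq0\). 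The terms \(L\varphi(u)=\varphi''a(\nabla u,\nabla u)=\varphi''w(1+w)\) give a good positive term of size \(w^2(1+w)/M^2\). This must dominate the cutoff term \(C(1+w)w(1/R^2+\sqrt K/R)\) and the curvature term \(Kw(1+w)\). The resulting inequality is
\[
\eta w\leq C M^2\Bigl(\frac1{R^2}+K\Bigr)
\]
at the maximum point, hence \(|\nabla u|\leq C(1/R+\sqrt K)M\) on \(B_R\). The bound is obtained on \(B_{2R}\) with the cutoff supported in \(B_{4R}\), which leaves room.

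\textbf{Main obstacle.} The cross terms are the hard part. The factor \(1+w\) in \(a(\nabla\eta,\nabla\eta)\) can be as large as \(w|\nabla\eta|^2\). Likewise the mixed term \(a(\nabla\eta,\nabla\varphi)\) scales like \(w^2\varphi'|\nabla\eta|\). All of these must be controlled using only the \(\varphi''\) gain and the improved Kato term, and the mismatch between the degenerate direction \(\nabla u\) and the others is delicate. I would first try to exploit the fact that in the direction \(\nabla u\) the equation forces \(u_{11}(1+w)=-\sum_{\alpha>1}u_{\alpha\alpha}\). This lets \(\nabla w\) along \(\nabla u\) be controlled by tangential Hessian components. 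If the pure \(w\)-based function fails, I would replace \(w\) by \(e^{w/2}\), which is the natural flux density, and rerun Step 2.
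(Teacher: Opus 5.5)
There is a genuine gap in Step~2, at the claim that $L\eta\geq -C(1+w)(1/R^2+\sqrt K/R)$ follows from the Laplacian comparison theorem. Your linearized operator is anisotropic. For a radial cutoff $\eta=\psi(r)$ with $\psi'\leq0$,
\[
L\eta=\psi'(r)\bigl(\Delta r+\nabla^2 r(\nabla u,\nabla u)\bigr)+\psi''(r)\bigl(1+\ip{\nabla r}{\nabla u}^2\bigr).
\]
Laplacian comparison bounds only the trace $\Delta r$. To control $\psi'\,\nabla^2 r(\nabla u,\nabla u)$ you need $\nabla^2 r\leq C(1/r+\sqrt K)\,g$ in the single direction $\nabla u$. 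That is Hessian comparison, and it requires a lower bound on sectional curvature. Under $\Ric_g\geq-Kg$ alone, the top eigenvalue of $\nabla^2 r$ is not controlled by $n$, $K$, $r$: the other eigenvalues can be very negative while the trace stays bounded. Calabi's trick only handles the cut locus, not this. The offending term has size $w\,|\psi'|\,\nabla^2 r(\nu,\nu)$ with $\nu=\nabla u/|\nabla u|$, and your $\varphi''$-gain is supposed to absorb it. So the maximum-principle scheme does not close under the theorem's hypotheses. This is why maximum-principle estimates of this kind (Kotschwar--Ni, Wu--Ruan--Yang) assume a sectional curvature bound.

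The gain itself is also not there as stated. For $\varphi=1+u/M$ one has $\varphi''=0$. For $\varphi=(2M-u)^{-1}$, at a critical point of $G$ the term $\eta w\,\varphi''w(1+w)$ is cancelled exactly by the piece $-2\eta w\,\varphi'^2w(1+w)/\varphi$ of the cross term $2\eta\,a(\nabla\varphi,\nabla w)$, because $\varphi''-2\varphi'^2/\varphi=-\varphi^2(1/\varphi)''$. That defect is repairable: take $1/\varphi$ strictly concave, and use $e^{w/2}$ to remove the bad term $-\tfrac12|\nabla w|^2$ in $Lw$. The cutoff problem is not repairable this way.

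The paper never differentiates the cutoff twice. Set $h=\tfrac12|\nabla u|^2$, $W=e^h-1$ and $A=I+\nabla u\otimes\nabla u$. The divergence-form Bochner identity $\divg(A\nabla W)=e^h\bigl(|\nabla^2u|^2+\Ric(\nabla u,\nabla u)+|\nabla h|^2\bigr)$ gives $\divg(A\nabla W)\geq-2K\lambda W$ with $\lambda=1+2\log(1+W)$. The paper then runs Moser iteration with Saloff-Coste's Sobolev inequality, which needs only the Ricci bound. There the cutoff enters only through $\ip{A\nabla\eta}{\nabla\eta}\leq\lambda|\nabla\eta|^2$. The unbounded ellipticity ratio is harmless because $\lambda^{n+2}\lesssim1+W$ pointwise, which reduces everything to $\fint_{B_{2\rho}}W\dd$. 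With $S=\osc_{B_{4\rho}(x)}u$ and $z=S^2/(32\rho^2)$, this average is bounded by $a_n(e^{z}-1)$. The bound comes from comparing the energy of $u$ with the truncations $\min\{u,q_+\}$ and $\max\{u,q_-\}$ by distance cones $q_\pm$ of slope $S/(4\rho)$. The resulting estimate $\sup_{B_\rho}W\leq D_n(e^{2z}-1)$ converts to $|\nabla u|\leq C_nS/\rho$.
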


\begin{remark}
Throughout the paper, we 
state our results for $C^3$ solutions to justify the
differential calculations below.
The results also hold for local minimizers
$u\in W^{1,1}_{\mathrm{loc}}$ with locally finite
exponential energy.
Indeed, such minimizers are smooth by
\cite[Section~2]{Lieberman}.

In particular, the results hold for weak solutions in
$W^{1,\infty}_{\mathrm{loc}}$.
The convexity of the energy implies that each such
solution is a local minimizer.
\end{remark}

We next prove a gradient estimate for functions bounded
from one side. For this purpose, define
\begin{equation}\label{eq:psi-definition}
k=\max\{n-1,2\},\qquad
\Psi_k(t)=t^{1/k}\exp\!\left(\frac{t^2}{2k}\right),
\qquad t\geq0.
\end{equation}
Note that \(\Psi_k\) is strictly increasing and \(\Psi_k(0)=0\).

\begin{theorem}\label{thm:one-sided}
Let \((M^n,g)\) be a complete connected Riemannian manifold without boundary with
\(
\Ric_g\geq-Kg\) for some \(K\geq0.
\)
Let \(u\in C^3(B_{4R}(p))\) be exponentially harmonic.
If \(u\geq c\) on \(B_{4R}(p)\) for some \(c\in\R\), then
\begin{equation}\label{eq:one-sided-gradient}
|\nabla u(x)|
\leq C_n\Psi_k\!\left[
\left(\frac1R+\sqrt K\right)(u(x)-c)\right],
\qquad x\in B_R(p).
\end{equation}
\end{theorem}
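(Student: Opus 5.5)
We plan to reduce Theorem~\ref{thm:one-sided} to a local estimate of Cheng--Yau type for the positive function \(v=u-c+\varepsilon\), and then let \(\varepsilon\to0\). Since equation \eqref{eq:nondiv} is invariant under adding constants, \(v>0\) is exponentially harmonic on \(B_{4R}(p)\). The shape of \(\Psi_k\) suggests the intermediate bound
\[
|\nabla u|^{k}e^{-|\nabla u|^2/2}\lesssim \frac{C}{v}\Big(\frac1R+\sqrt K\Big)
\]
in some form. More precisely, we aim to show that the quantity
\[
Q=\frac{|\nabla u|^{2k}e^{-|\nabla u|^2}}{v^{2}}
\]
is bounded by \(C_n(1/R+\sqrt K)^2\) on \(B_R(p)\), up to a factor \(|\nabla u|^{2k-2}\) (to be tuned). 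Inverting \(t\mapsto t^{k}e^{-t^2/2}\) is not monotone, so we will instead work directly with the equivalent form \(|\nabla u|\le C\Psi_k[(1/R+\sqrt K)v]\). Concretely, the claim \(t\le C\Psi_k(s)\) is equivalent, for the increasing function \(\Psi_k\), to \(\Psi_k^{-1}(t/C)\le s\). So we consider
\[
w=\frac{F(|\nabla u|)}{v}, \qquad F=\Psi_k^{-1}\ \text{(suitably normalized)},
\]
and prove \(w\le C_n(1/R+\sqrt K)\) by a maximum principle argument.

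\textbf{Step 1 (Bochner formula for the linearized operator).} Set \(f=|\nabla u|^2\). Let \(L\varphi=\Delta\varphi+\nabla^2\varphi(\nabla u,\nabla u)\), which is \(e^{-f/2}\) times the divergence operator \(\divg(e^{f/2}(\mathrm{Id}+\nabla u\otimes\nabla u)\nabla\varphi)\) up to lower-order terms. Differentiating \eqref{eq:nondiv} and applying Bochner with \(\Ric\ge-K\) gives a differential inequality for \(Lf\). The inequality contains a refined Kato term: using the equation, \(|\nabla^2u|^2\) dominates \((1+\frac{1}{n-1})\)-type multiples of \(|\nabla|\nabla u||^2\), with the Hessian measured in the metric \(g+du\otimes du\). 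This is where \(k=n-1\) enters, and \(k=\max\{n-1,2\}\) is chosen to handle \(n=2\) uniformly. We also have \(Lv=0\).

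\textbf{Step 2 (the auxiliary function).} We compute \(L\) of \(\log w=\log F(|\nabla u|)-\log v\). The exponent \(1/k\) and the factor \(e^{t^2/(2k)}\) in \(\Psi_k\) are precisely chosen so that the good Kato term and the term \(|\nabla \log v|^2_{g+du\otimes du}\) combine. The latter term carries the weight \(1+f\); this is why an exponential factor appears, since the ellipticity ratio of \(L\) is \(1+f\), unbounded. The goal is an inequality of the form
\[
L\log w\ge c_n w^2\,(\text{weight})-C(K+\ldots)-\text{gradient terms}.
\]

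\textbf{Step 3 (cutoff and maximum principle).} We take a Laplacian-comparison cutoff \(\eta\) supported in \(B_{2R}\) as in Cheng--Yau, with \(L\eta\ge -C(1+f)(1/R^2+\sqrt K/R)\). We then evaluate at a maximum point of \(\eta w\), absorb the cross terms by Cauchy--Schwarz, and conclude \(\eta w\le C_n(1/R+\sqrt K)\). If Step 2 only yields control when \(f\) is large, the region \(f\le1\) can be treated with the trivial bound, since \(\Psi_k(s)\ge s^{1/k}\). Alternatively, Theorem~\ref{thm:gradient} could be applied on balls of radius comparable to \(v\).

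\textbf{Main obstacle.} The hardest step is Step 2: making the anisotropic weight \(1+f\) in \(L\) compatible with the isotropic Kato improvement, so that a positive multiple of \(w^2\) survives without requiring a sectional-curvature bound. We would first try to choose \(F\) by solving the ODE that makes the bad \(F''\)-terms cancel exactly against the Kato term. We expect this ODE to produce \(\Psi_k^{-1}\).
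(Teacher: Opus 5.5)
There is a genuine gap, and it lies in Step 3 rather than where you locate it. For a radial cutoff \(\eta=\psi(r)\), your operator gives
\[
L\eta=\psi''(r)\bigl(1+\ip{\nabla r}{\nabla u}^2\bigr)+\psi'(r)\bigl(\Delta r+\nabla^2r(\nabla u,\nabla u)\bigr).
\]
Since \(\psi'\le0\), the bound \(L\eta\ge-C(1+f)(1/R^2+\sqrt K/R)\) requires an upper bound on \(\nabla^2r(\nabla u,\nabla u)\). That is Hessian comparison in the direction \(\nabla u\), and it needs a lower sectional curvature bound. Under \(\Ric_g\ge-Kg\), only the trace \(\Delta r\) is controlled. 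This is exactly why the maximum-principle estimates of Kotschwar--Ni (for the \(p\)-Laplacian) and Wu--Ruan--Yang (for this equation) assume sectional curvature bounds.

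The Bochner computation in Step 2 involves only \(\Ric(\nabla u,\nabla u)\), so no choice of \(F\) can fix the cutoff problem. Step 2 is also only conjectured: you do not exhibit any \(F\) for which a positive multiple of \(w^2\) survives, let alone show that it is \(\Psi_k^{-1}\). Your fallback for \(f\le1\) fails too. The bound \(|\nabla u|\le1\) does not give \(|\nabla u|\le C\Psi_k(s)\) when \(s=(1/R+\sqrt K)v(x)\) is small.

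The paper avoids pointwise cutoff computations entirely. Theorem~\ref{thm:gradient} comes from energy comparison and Moser iteration for \(\divg(A\nabla W)\ge-2K\lambda W\). This uses curvature only through Bochner and Saloff-Coste's Sobolev inequality.

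For Theorem~\ref{thm:one-sided}, the missing ingredient is a one-sided Harnack-type bound for \(v=u-c+\varepsilon>0\):
\[
\sup_{B_L(z)}v\le C_nL\,\bigl[a(v(z)/L)\bigr]^{1/k},\qquad a(s)=se^{s^2/2},\quad KL^2\le1.
\]
The paper obtains it by comparing \(v\) with radial barriers \(h_Q=f_Q\circ r\). These are chosen so that the nonlinear flux \(e^{|\nabla h_Q|^2/2}\nabla h_Q=-Qr^{-k}e^{-br}\nabla r\) is itself the gradient of a radial function \(G_Q\circ r\). Its divergence is then \(\Delta(G_Q\circ r)\ge0\) by Laplacian comparison alone. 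This step forces \(k\ge n-1\), while \(k\ge2\) makes \(\int_0^\infty a^{-1}(Qt^{-k})\,dt\) finite.

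Combining this with the oscillation estimate gives \(|\nabla u(x)|\le C_n\ell^{-1}\sup_{B_\ell(x)}v\le C_n\Psi_k(v(x)/\ell)\). Your alternative of applying Theorem~\ref{thm:gradient} on smaller balls points in this direction. Without the Harnack-type bound, however, it only controls \(|\nabla u(x)|\) by \(\sup v\) over a ball, not by \(v(x)\), and that bound is where \(\Psi_k\) comes from.
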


\begin{remark}
    If \(u\leq c\) on \(B_{4R}(p)\), the same estimate \eqref{eq:one-sided-gradient} holds with
\(u(x)-c\) replaced by \(c-u(x)\).
\end{remark}

When \(K=0\), we obtain the following Liouville theorem
under a one-sided sublinear growth condition.

\begin{theorem}\label{thm:liouville}
Let \((M^n,g)\) be a complete connected Riemannian manifold
without boundary with \(\Ric_g\geq0\).
Let \(u\in C^3(M)\) be exponentially harmonic.
If, for some \(p\in M\), either
\begin{equation}\label{eq:one-sided-growth}
u(p)-\inf_{B_R(p)}u=o(R)
\quad\text{or}\quad
\sup_{B_R(p)}u-u(p)=o(R)
\qquad\text{as }R\to\infty,
\end{equation}
then \(u\) is constant.
\end{theorem}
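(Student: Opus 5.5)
The plan is to derive Theorem~\ref{thm:liouville} directly from Theorem~\ref{thm:one-sided} with \(K=0\). Suppose first that \(u(p)-\inf_{B_R(p)}u=o(R)\). Fix a point \(x\in M\) and let \(R\) be large, so that \(x\in B_R(p)\); for instance \(R>d(p,x)\) suffices. Put \(c_R=\inf_{B_{4R}(p)}u\). This infimum is finite, because the hypothesis applied at radius \(4R\) gives \(u(p)-c_R=o(R)<\infty\). Then \(u\geq c_R\) on \(B_{4R}(p)\), and Theorem~\ref{thm:one-sided} with \(K=0\) yields
\[
|\nabla u(x)|\leq C_n\Psi_k\!\left(\frac{u(x)-c_R}{R}\right).
\]

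Next I would bound the argument of \(\Psi_k\). Since \(u\) is continuous at \(x\), the difference \(u(x)-u(p)\) is a fixed number independent of \(R\). Hence
\[
\frac{u(x)-c_R}{R}=\frac{u(x)-u(p)}{R}+\frac{u(p)-\inf_{B_{4R}(p)}u}{R}.
\]
The first term tends to \(0\) as \(R\to\infty\). The second term also tends to \(0\): it equals \(4\cdot\frac{u(p)-\inf_{B_{4R}(p)}u}{4R}\), and this is \(o(1)\) by \eqref{eq:one-sided-growth}. The argument is nonnegative, and \(\Psi_k\) is continuous at \(0\) with \(\Psi_k(0)=0\). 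Letting \(R\to\infty\) therefore gives \(\nabla u(x)=0\). Since \(x\) was arbitrary and \(M\) is connected, \(u\) is constant.

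The case \(\sup_{B_R(p)}u-u(p)=o(R)\) is symmetric. One can use the remark following Theorem~\ref{thm:one-sided}, with \(c_R=\sup_{B_{4R}(p)}u\). Alternatively, note that \(-u\) is also exponentially harmonic, because equation \eqref{eq:nondiv} is odd in \(u\), and apply the first case to \(-u\).

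I do not expect a real obstacle, since all the analytic work is in Theorem~\ref{thm:one-sided}. The only points needing care are routine. First, \(u\in C^3(B_{4R}(p))\) holds for every \(R\) because \(u\in C^3(M)\). Second, the growth hypothesis is stated at radius \(R\), while the theorem uses the infimum over \(B_{4R}(p)\); this only costs a factor \(4\), which does not affect the \(o(R)\) conclusion.
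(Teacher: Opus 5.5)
Your proposal is correct and follows the paper's proof essentially verbatim: take \(c_R=\inf_{B_{4R}(p)}u\), apply Theorem~\ref{thm:one-sided} with \(K=0\), split \((u(x)-c_R)/R\) into two terms tending to zero, and handle the other case by passing to \(-u\). The only difference is that you justify finiteness of \(c_R\) through the growth hypothesis, whereas the paper uses compactness of \(\overline{B_{4R}(p)}\); both work.
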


As a consequence, we get the following Liouville theorem for one-sided bounded exponentially harmonic functions.

\begin{corollary}\label{cor:bounded}
Every \(C^3\) exponentially harmonic function bounded above or below
on a complete connected Riemannian manifold with nonnegative Ricci
curvature is constant. In particular, every bounded exponentially
harmonic function on such a manifold is constant.
\end{corollary}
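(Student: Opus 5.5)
The corollary follows directly from Theorem~\ref{thm:liouville}, so the plan is simply to check its growth hypothesis \eqref{eq:one-sided-growth}.

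Let \(u\in C^3(M)\) be exponentially harmonic on a complete connected manifold with \(\Ric_g\geq0\), and fix any \(p\in M\).

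Suppose first that \(u\) is bounded below, say \(u\geq c\) on \(M\). Then for every \(R>0\),
\[
0\leq u(p)-\inf_{B_R(p)}u\leq u(p)-c .
\]
The right-hand side is a constant independent of \(R\). Hence \(u(p)-\inf_{B_R(p)}u=O(1)=o(R)\) as \(R\to\infty\), which is the first alternative in \eqref{eq:one-sided-growth}. Theorem~\ref{thm:liouville} then shows that \(u\) is constant.

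If instead \(u\leq c\) on \(M\), then
\[
0\leq \sup_{B_R(p)}u-u(p)\leq c-u(p),
\]
so the second alternative in \eqref{eq:one-sided-growth} holds. Theorem~\ref{thm:liouville} again gives constancy. (One could also apply the first case to \(-u\), since the equation \eqref{eq:eh} is odd in \(u\) and \(-u\) is again exponentially harmonic; this is not needed.)

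A bounded function is in particular bounded below, so the last assertion of the corollary is a special case.

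There is no real obstacle here; all the analytic work is contained in Theorem~\ref{thm:liouville}. One could bypass that theorem and use Theorem~\ref{thm:one-sided} directly with \(K=0\): at any \(x\), letting \(R\to\infty\) sends the argument of \(\Psi_k\) to \(0\). Since \(\Psi_k\) is continuous at \(0\) with \(\Psi_k(0)=0\), this gives \(\nabla u(x)=0\). As \(x\) is arbitrary and \(M\) is connected, \(u\) is constant.
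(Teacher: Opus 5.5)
Your proposal is correct and follows the paper's proof: you verify the first alternative of \eqref{eq:one-sided-growth} via \(0\leq u(p)-\inf_{B_R(p)}u\leq u(p)-c\) and then invoke Theorem~\ref{thm:liouville}. The only difference is cosmetic: for \(u\) bounded above you use the second alternative of \eqref{eq:one-sided-growth} directly, whereas the paper applies the bounded-below case to \(-u\).
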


The paper is organized as follows.
Section~\ref{sec:energy} contains the energy comparison and the
Bochner identity. As an application of the Bochner identity, we also prove that if \(\Ric_g\geq0\) and \(|\nabla u|\) attains
its global maximum, then either \(u\) is constant or \(M\) splits
isometrically as \(\R\times N\), with \(u\) affine in the
\(\R\)-direction; see Corollary~\ref{cor:gradient-rigidity}. In Section~\ref{sec:iteration}, we establish the mean-value
estimate. We prove the oscillation gradient estimate in
Section~\ref{sec:gradient} and the one-sided gradient estimate in
Section~\ref{sec:one-sided}. Section~\ref{sec:liouville} contains
the Liouville theorems.

\section{Energy comparison and a Bochner identity}\label{sec:energy}

Throughout the paper, \(C_n\) denotes a positive constant depending
only on \(n\) whose value may change from line to line.
All norms, derivatives, and geodesic balls are taken with respect to the metric \(g\).
For a measurable set \(U\) of positive finite volume, write
\[
\fint_Uf\dd=\frac1{\Vol(U)}\int_Uf\dd.
\]

\subsection{Energy Comparison}

We first record the volume estimate used in the proof.
Suppose that \(\Ric_g\geq-Kg\) and \(K\rho^2\leq1\).
By Bishop--Gromov volume comparison theorem,
\begin{equation}\label{eq:doubling}
\frac{\Vol(B_{4\rho}(x))}{\Vol(B_{2\rho}(x))}\leq 2^n\exp(C_n\sqrt K\,\rho)
\leq C_n.
\end{equation}

For each \(x\in M\), define
\[
\Phi(\xi)=e^{|\xi|^2/2}-1,\qquad \xi\in T_xM.
\]
Its derivatives satisfy
\begin{align}
D\Phi(\xi)[\zeta]
 &=e^{|\xi|^2/2}\ip{\xi}{\zeta},\label{eq:convex-first}\\
D^2\Phi(\xi)[\zeta,\zeta]
 &=e^{|\xi|^2/2}
   \left(|\zeta|^2+\ip{\xi}{\zeta}^2\right).
\label{eq:convex-second}
\end{align}
In particular, \(\Phi\) is strictly convex in the gradient variable.

\begin{lemma}\label{lem:minimality}
Let \(\Omega \subset M\) be open and \(u \in C^3(\Omega)\) solve \eqref{eq:eh}.
Let \(v\) be locally Lipschitz on \(\Omega\) and assume that
\(v-u\) has compact support in \(\Omega\).
Then for every open set \(\Omega'\Subset\Omega\) containing this support,
\begin{equation}\label{eq:minimality}
\int_{\Omega'}\Phi(\nabla u)\dd
\leq\int_{\Omega'}\Phi(\nabla v)\dd.
\end{equation}
\end{lemma}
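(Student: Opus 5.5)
The plan is to use the convexity of \(\Phi\) in the gradient variable together with the weak form of \eqref{eq:eh}. Set \(\varphi=v-u\), which is locally Lipschitz with compact support \(S\subset\Omega'\). Since \(\Phi\) is convex on each fiber \(T_xM\), the first-order convexity inequality holds pointwise:
\[
\Phi(\nabla v)\geq\Phi(\nabla u)+D\Phi(\nabla u)[\nabla v-\nabla u].
\]
By \eqref{eq:convex-first}, this reads \(\Phi(\nabla v)\geq\Phi(\nabla u)+e^{|\nabla u|^2/2}\ip{\nabla u}{\nabla\varphi}\) almost everywhere, since \(v\) is differentiable a.e. by Rademacher's theorem. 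Alternatively one can derive it from \eqref{eq:convex-second}: Taylor's formula along \(\xi_t=\nabla u+t\nabla\varphi\) has a nonnegative remainder.

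Next I integrate this inequality over \(\Omega'\). Both sides are integrable on \(\Omega'\). Indeed \(\nabla u\) is continuous on \(\overline{\Omega'}\), which is compact in \(\Omega\). Also \(\nabla v=\nabla u+\nabla\varphi\), where \(\nabla\varphi\) is bounded (Lipschitz on the compact set \(S\)) and vanishes outside \(S\). This gives
\[
\int_{\Omega'}\Phi(\nabla v)\dd\geq\int_{\Omega'}\Phi(\nabla u)\dd+\int_{\Omega'}e^{|\nabla u|^2/2}\ip{\nabla u}{\nabla\varphi}\dd .
\]

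The last step is to show that the final integral vanishes. The vector field \(X=e^{|\nabla u|^2/2}\nabla u\) is \(C^2\) since \(u\in C^3\), and \(\divg X=0\) by \eqref{eq:eh}. So \(\divg(\varphi X)=\ip{\nabla\varphi}{X}\) a.e. Because \(\varphi\) is Lipschitz with compact support in \(\Omega'\), the divergence theorem gives \(\int\divg(\varphi X)\dd=0\). To justify this for merely Lipschitz \(\varphi\), I would mollify \(\varphi\) in charts via a partition of unity, or equivalently note that \(\int\ip{\nabla\varphi}{X}=-\int\varphi\,\divg X\) holds for \(W^{1,\infty}\) functions with compact support by density of smooth functions. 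This yields \eqref{eq:minimality}.

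The only mildly delicate point is this regularity bookkeeping: the integration by parts with a Lipschitz test function, and the a.e. validity of the convexity inequality. Neither is a real obstacle, since \(u\) is \(C^3\) and everything is compactly supported.
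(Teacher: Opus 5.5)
Your proposal is correct and follows the paper's proof. Both integrate the pointwise convexity inequality \(\Phi(\nabla v)-\Phi(\nabla u)\geq e^{|\nabla u|^2/2}\ip{\nabla u}{\nabla(v-u)}\) over \(\Omega'\) and kill the linear term using \eqref{eq:eh}; you additionally supply the integrability and Lipschitz-test-function justifications that the paper leaves implicit.
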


\begin{proof}
By convexity of $\Phi$, we have
\[
\Phi(\nabla v)-\Phi(\nabla u)
\geq e^{|\nabla u|^2/2}
       \ip{\nabla u}{\nabla(v-u)}.
\]
The integral of the right-hand side vanishes by \eqref{eq:eh}.
Hence, \eqref{eq:minimality} follows.
\end{proof}

\begin{lemma}\label{lem:energy}
Let \(u\in C^3(B_{4\rho}(x))\cap L^\infty(B_{4\rho}(x))\)
solve \eqref{eq:eh}. Set
\[
S=\osc_{B_{4\rho}(x)}u,\qquad
W=e^{|\nabla u|^2/2}-1.
\]
Then
\begin{equation}\label{eq:energy-volume}
\fint_{B_{2\rho}(x)}W\dd
\leq
2\frac{\Vol(B_{4\rho}(x))}{\Vol(B_{2\rho}(x))}
\left(e^{S^2/(32\rho^2)}-1\right).
\end{equation}
If \(\Ric_g\geq-Kg\) and \(K\rho^2\leq1\), then
\begin{equation}\label{eq:average-energy}
\fint_{B_{2\rho}(x)}W\dd
\leq a_n\left(e^{S^2/(32\rho^2)}-1\right)
\end{equation}
for some constant \(a_n\geq1\) depending only on \(n\).
\end{lemma}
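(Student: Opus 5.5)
We compare the energy of \(u\) on \(B_{4\rho}(x)\) with that of a competitor \(v\) built by cutting \(u\) off toward a constant, and then apply Lemma~\ref{lem:minimality}. Set \(m=\inf_{B_{4\rho}(x)}u\) and \(M_0=\sup_{B_{4\rho}(x)}u\), so that \(S=M_0-m\), and let \(c=(M_0+m)/2\). Then \(|u-c|\leq S/2\) on \(B_{4\rho}(x)\).

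Choose a Lipschitz cutoff \(\eta\) with \(\eta=1\) on \(B_{2\rho}(x)\) and \(\eta=0\) outside \(B_{3\rho'}(x)\) for some \(3\rho'<4\rho\), and with \(|\nabla\eta|\leq 1/(2\rho)\) up to a harmless adjustment. For instance, take \(\eta(y)=\min\{1,\max\{0,(4\rho-\delta-d(x,y))/(2\rho-\delta)\}\}\) and let \(\delta\to0\) at the end. Define the competitor
\[
v=u-\eta\,(u-c),
\]
so that \(v=c\) on \(B_{2\rho}(x)\), \(v=u\) near \(\partial B_{4\rho}(x)\), and \(v-u\) has compact support in \(B_{4\rho}(x)\). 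Its gradient is \(\nabla v=(1-\eta)\nabla u-(u-c)\nabla\eta\).

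Lemma~\ref{lem:minimality}, applied with \(\Omega'\) slightly smaller than \(B_{4\rho}(x)\), gives
\[
\int W\leq \int \Phi\bigl((1-\eta)\nabla u+\eta\cdot\tfrac{-(u-c)\nabla\eta}{\eta}\bigr).
\]
Since \(\Phi\) is convex with \(\Phi(0)=0\), writing \(\nabla v\) as a convex combination of \(\nabla u\) and \(-(u-c)\nabla\eta/\eta\) yields
\[
\Phi(\nabla v)\leq(1-\eta)\Phi(\nabla u)+\eta\,\Phi\!\left(\frac{(u-c)\nabla\eta}{\eta}\right).
\]
The factor \(1/\eta\) is a problem, so instead I would split the estimate more crudely using \(|\nabla v|\leq (1-\eta)|\nabla u|+|u-c||\nabla\eta|\) together with convexity:
\[
\Phi(\nabla v)\leq \tfrac12\Phi(2(1-\eta)\nabla u)+\tfrac12\Phi(2(u-c)\nabla\eta).
\]
This, however, produces \(\Phi(2\nabla u)\), which is not controlled by \(W\).

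The cleaner route is to localize the comparison to the annulus. On \(\{\eta=1\}\supset B_{2\rho}(x)\) we have \(\nabla v=0\), hence \(\Phi(\nabla v)=0\). On \(\{\eta=0\}\) we have \(\Phi(\nabla v)=W\), and those terms cancel from both sides. This leaves
\[
\int_{B_{2\rho}}W+\int_{A}W\leq\int_A\Phi(\nabla v),\qquad A=\{0<\eta<1\}.
\]
To absorb the \(\nabla u\) contribution on \(A\), use the convexity inequality \(\Phi(\nabla v)\leq(1-\eta)\Phi(\nabla u)+\eta\Phi(-(u-c)\nabla\eta/\eta)\) together with the monotonicity of \(t\mapsto\Phi\) in \(|\xi|\). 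The factor \(1/\eta\) blowing up is the main obstacle.

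I would fix this by choosing a \emph{piecewise constant} \(v\) rather than a linear cutoff. Define
\[
v=\max\{\min\{u,\;c+\tfrac{S}{2}\phi\},\;c-\tfrac{S}{2}\phi\},
\]
where \(\phi(y)=(d(x,y)-2\rho)_+/(2\rho)\), truncated at \(1\) near the boundary. Since \(|u-c|\leq S/2\), \(v=u\) near \(\partial B_{4\rho}\) (after slightly shrinking the radius) and \(v=c\) on \(B_{2\rho}(x)\). At each point, \(\nabla v\) is either \(\nabla u\) or \(\pm\tfrac S2\nabla\phi\), with \(|\tfrac S2\nabla\phi|\leq S/(4\rho)\). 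The set where \(\nabla v=\nabla u\) contributes \(W\) to both sides and cancels. Hence
\[
\int_{B_{2\rho}}W\leq\int_{B_{4\rho}}\Phi\!\left(\tfrac S2\nabla\phi\right)\leq \Vol(B_{4\rho})\left(e^{S^2/(32\rho^2)}-1\right).
\]
The factor \(2\) in \eqref{eq:energy-volume} leaves room for the slight shrinking of radii (\(4\rho-\delta\)) and a limiting argument. Dividing by \(\Vol(B_{2\rho})\) gives \eqref{eq:energy-volume}. The bound \eqref{eq:average-energy} then follows from \eqref{eq:doubling} with \(a_n=\max\{1,2C_n\}\).
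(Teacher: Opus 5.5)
Your final argument, the single two-sided clamp $v=\max\{\min\{u,c+\tfrac S2\phi\},\,c-\tfrac S2\phi\}$, is correct. The earlier attempts with $u-\eta(u-c)$ can simply be deleted. The paper packages the same idea differently.

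\textbf{The paper's route.} It uses two one-sided competitors $v_+=\min\{u,q_+\}$ and $v_-=\max\{u,q_-\}$. Here $q_\pm$ are cones of slope $S_{a,b}/(4\rho)$ centred at $x$, with strict levels $a<\inf u$ and $b>\sup u$. Lemma~\ref{lem:minimality} gives $\int_{D_\pm}W\le\Vol(D_\pm)(e^{S^2/(32\rho^2)}-1)$ on the sets $D_\pm$ where each cone is active. Since $q_+<q_-$ on $B_{2\rho}(x)$, every point of that ball lies in $D_+\cup D_-$. The factor $2$ comes from adding these two estimates.

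\textbf{What your version gives.} Your competitor is identically $c$ on $B_{2\rho}(x)$, and you cancel the coincidence set. The step that lets you put all of $\int_{B_{2\rho}}W$ on the left is that on $B_{2\rho}(x)\cap\{\nabla v=\nabla u\}$ you have $\nabla u=0$, hence $W=0$; state it explicitly. You then obtain the estimate with constant $1$ in place of $2$, even with $\Vol(B_{4\rho}\setminus B_{2\rho})$ on the right, because $\Phi(\nabla v)=0$ on $B_{2\rho}(x)$. So your remark that the factor $2$ is needed to absorb the shrinking of radii is mistaken, though harmless. The limit $\delta\to0$ costs nothing by continuity.

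\textbf{Compact support.} State the compact support of $v-u$ precisely. With $\phi=(d(x,\cdot)-2\rho)_+/(2\rho)$ as written, $\phi<1$ throughout $B_{4\rho}(x)$. Then $v\neq u$ wherever $u$ comes close to $\sup u$ or $\inf u$ near the boundary, and $v-u$ need not have compact support. Take instead $\phi=\min\{1,(d(x,\cdot)-2\rho)_+/(2\rho-\delta)\}$. Then $v=u$ on $\{d(x,\cdot)\geq4\rho-\delta\}$ and $|\tfrac S2\nabla\phi|\le S/(2(2\rho-\delta))$; let $\delta\to0$ at the end. This plays the role of the paper's strict choice of $a,b$.

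\textbf{Comparison.} The paper's route has the small convenience that each competitor is a single $\min$ or $\max$ of $u$ with one Lipschitz function. Yours yields a sharper constant with a single application of minimality.
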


\begin{proof}
For the energy comparison, we construct two functions
using the distance function. Choose
\[
a<\inf_{B_{4\rho}(x)}u,\qquad
b>\sup_{B_{4\rho}(x)}u.
\]
Write \(S_{a,b}=b-a\) and \(r(y)=d_g(x,y)\).
Define
\[
q_+(y)=a+\frac{S_{a,b}}{4\rho}r(y),\qquad
q_-(y)=b-\frac{S_{a,b}}{4\rho}r(y),
\]
and
\[
v_+=\min\{u,q_+\},\qquad v_-=\max\{u,q_-\}.
\]
Let
\begin{align*}
D_+&=\{y\in B_{4\rho}(x):u(y)>q_+(y)\},\\
D_-&=\{y\in B_{4\rho}(x):u(y)<q_-(y)\}.
\end{align*}
The strict choices of \(a\) and \(b\) imply that both
\(v_+-u\) and \(v_--u\) have compact support in \(B_{4\rho}(x)\).

Since \(|\nabla r|\leq1\) almost everywhere, we have
\[
|\nabla q_\pm|\leq\frac{S_{a,b}}{4\rho}
\quad\text{almost everywhere}.
\]
The gradients of \(v_+\) and \(u\) agree almost everywhere outside
\(D_+\). On \(D_+\), the gradient of \(v_+\) coincides with that of \(q_+\).
Applying Lemma~\ref{lem:minimality} to \(v_+\), we obtain
\[
\int_{D_+}W\dd
\leq \Vol(D_+)
   \left(e^{S_{a,b}^2/(32\rho^2)}-1\right).
\]
The same argument for \(v_-\) gives
\[
\int_{D_-}W\dd
\leq \Vol(D_-)
   \left(e^{S_{a,b}^2/(32\rho^2)}-1\right).
\]

On \(B_{2\rho}(x)\), we have \(q_+<q_-\).
Every point of this ball therefore belongs to \(D_+\cup D_-\).
Since \(W\geq0\), it follows that
\[
\int_{B_{2\rho}(x)}W\dd
\leq2\Vol(B_{4\rho}(x))
   \left(e^{S_{a,b}^2/(32\rho^2)}-1\right).
\]
Dividing \(\Vol(B_{2\rho}(x))\) and letting
\(a\uparrow\inf u\) and \(b\downarrow\sup u\) yields \eqref{eq:energy-volume}.
Inequality \eqref{eq:average-energy} follows from
\eqref{eq:doubling}.
\end{proof}

\subsection{A Bochner identity}\label{sec:bochner}

For a solution of \eqref{eq:eh}, set
\begin{equation}\label{eq:hAW}
h=\frac12|\nabla u|^2,\qquad W=e^h-1,\qquad
A(X)=X+\ip{\nabla u}{X}\nabla u.
\end{equation}
The endomorphism \(A\) is symmetric and positive definite.

\begin{lemma}\label{lem:bochner}
If \(u\in C^3(\Omega)\) solves \eqref{eq:eh}, then
\begin{equation}\label{eq:bochner}
\divg(A\nabla W)
=e^h\left(
 |\nabla^2u|^2+\Ric(\nabla u,\nabla u)+|\nabla h|^2
 \right).
\end{equation}
Moreover,
\begin{equation}\label{eq:ellipticity}
|X|^2\leq\ip{AX}{X}\leq\lambda|X|^2,\qquad
\lambda=1+2\log(1+W).
\end{equation}
If \(\Ric_g\geq-Kg\) on \(\Omega\), then
\begin{equation}\label{eq:subsolution}
\divg(A\nabla W)\geq-2K\lambda W.
\end{equation}
\end{lemma}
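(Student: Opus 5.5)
The plan is a direct computation: express $A\nabla W$ in terms of $h$, expand the divergence, use the equation to eliminate $\Delta u$, and then apply the classical Bochner formula for $\tfrac12|\nabla u|^2$.

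\textbf{Step 1: rewrite $A\nabla W$ and the equation.} Since $W=e^h-1$, we have $\nabla W=e^h\nabla h$, and so
\[
A\nabla W=e^h\nabla h+e^h\ip{\nabla u}{\nabla h}\nabla u .
\]
Recall $\nabla h=\nabla^2u(\nabla u,\cdot)$. Hence \eqref{eq:eh} reads $e^h(\Delta u+\ip{\nabla u}{\nabla h})=0$, that is,
\[
\Delta u=-\ip{\nabla u}{\nabla h}.
\]
This is exactly \eqref{eq:nondiv}.

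\textbf{Step 2: expand the divergence.} I split $\divg(A\nabla W)$ into two pieces. The first is
\[
\divg(e^h\nabla h)=e^h\left(|\nabla h|^2+\Delta h\right).
\]
For the second I use \eqref{eq:eh} in divergence form:
\[
\divg\!\left(\ip{\nabla u}{\nabla h}\,e^h\nabla u\right)
=\ip{\nabla u}{\nabla h}\divg(e^h\nabla u)+e^h\,\nabla u\big(\ip{\nabla u}{\nabla h}\big)
=e^h\,\nabla u\big(\ip{\nabla u}{\nabla h}\big).
\]
Now insert Bochner's formula
\[
\Delta h=|\nabla^2u|^2+\ip{\nabla u}{\nabla\Delta u}+\Ric(\nabla u,\nabla u),
\]
which needs $u\in C^3$. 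By Step 1,
\[
\ip{\nabla u}{\nabla\Delta u}=-\nabla u\big(\ip{\nabla u}{\nabla h}\big).
\]
This term cancels exactly against the second piece, which yields \eqref{eq:bochner}. The one thing I expect to need care with is keeping track of this cancellation; the rest is routine.

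\textbf{Step 3: ellipticity and the subsolution inequality.} For \eqref{eq:ellipticity}, write
\[
\ip{AX}{X}=|X|^2+\ip{\nabla u}{X}^2 .
\]
This gives the lower bound $|X|^2$ immediately. For the upper bound, Cauchy--Schwarz gives
\[
\ip{AX}{X}\le(1+|\nabla u|^2)|X|^2=(1+2h)|X|^2,
\]
and $h=\log(1+W)$, so $1+2h=\lambda$.

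For \eqref{eq:subsolution}, drop the nonnegative terms $|\nabla^2u|^2$ and $|\nabla h|^2$ in \eqref{eq:bochner}. The curvature assumption gives $\Ric(\nabla u,\nabla u)\ge-K|\nabla u|^2=-2Kh$, so
\[
\divg(A\nabla W)\ge-2K\,he^h .
\]
It therefore remains to prove the elementary inequality $he^h\le(1+2h)(e^h-1)$ for all $h\ge0$. Using $e^h\ge1+h$,
\[
(1+2h)(e^h-1)-he^h=(1+h)e^h-1-2h\ge(1+h)^2-1-2h=h^2\ge0 .
\]
Since $\lambda=1+2h$, this gives $he^h\le\lambda W$, and \eqref{eq:subsolution} follows.
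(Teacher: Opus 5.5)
Your proposal is correct and follows essentially the same route as the paper: both combine the Bochner formula with the two forms of the equation, $\Delta u=-\ip{\nabla u}{\nabla h}$ and $\divg(e^h\nabla u)=0$, and both finish with the same elementary inequality (your $he^h\le(1+2h)(e^h-1)$ is the paper's $(1+s)\log(1+s)\le s(1+2\log(1+s))$ with $s=e^h-1$). The only difference is bookkeeping: you split $A\nabla W$ so that the derivative of $\ip{\nabla u}{\nabla h}$ cancels directly against $\ip{\nabla u}{\nabla\Delta u}$, whereas the paper expands this term via $\nabla_{\nabla u}\nabla u=\nabla h$ and computes $\nabla_iB^{ij}=2e^hh^j$ for $B=e^hA$.
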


\begin{proof}
Equation \eqref{eq:nondiv} gives
\begin{equation}\label{eq:pde-h}
\Delta u=-\ip{\nabla u}{\nabla h}.
\end{equation}
By the Bochner formula,
\begin{align*}
\Delta h
&=|\nabla^2u|^2+\Ric(\nabla u,\nabla u)
  +\ip{\nabla u}{\nabla\Delta u}\\
&=|\nabla^2u|^2+\Ric(\nabla u,\nabla u)
  -|\nabla h|^2-\nabla^2h(\nabla u,\nabla u).
\end{align*}
Here we used \(\nabla_{\nabla u}\nabla u=\nabla h\).
Consequently,
\begin{equation}\label{eq:A-hessian-h}
A^{ij}\nabla_i\nabla_jh
=|\nabla^2u|^2+\Ric(\nabla u,\nabla u)-|\nabla h|^2.
\end{equation}

Set \(B^{ij}=e^h(g^{ij}+u^iu^j)\).
Since \(\nabla_i(e^hu^i)=0\), covariant differentiation gives
\begin{align*}
\nabla_iB^{ij}
&=e^hh^j+\nabla_i(e^hu^i)u^j
  +e^hu^i\nabla_i u^j\\
&=2e^hh^j.
\end{align*}
Note that \(\nabla W=e^h\nabla h\), so \(A\nabla W=B\nabla h\).
Together with \eqref{eq:A-hessian-h}, this yields
\begin{align*}
\divg(A\nabla W)
&=B^{ij}\nabla_i\nabla_jh+(\nabla_iB^{ij})h_j\\
&=e^h (\Delta h+\nabla^2h(\nabla u,\nabla u)+2|\nabla h|^2)\\
&=e^h\left(
|\nabla^2u|^2+\Ric(\nabla u,\nabla u)+|\nabla h|^2
\right).
\end{align*}
This proves \eqref{eq:bochner}.

For every tangent vector \(X\),
\[
\ip{AX}{X}=|X|^2+\ip{\nabla u}{X}^2
\leq(1+|\nabla u|^2)|X|^2.
\]
Since \(1+|\nabla u|^2=1+2\log(1+W)\),
inequality \eqref{eq:ellipticity} follows.

Under \(\Ric_g\geq-Kg\), equation \eqref{eq:bochner} implies
\[
\divg(A\nabla W)
\geq-2K(1+W)\log(1+W).
\]
For \(s\geq0\), we have
\begin{align*}
(1+s)\log(1+s)
&\leq s\log(1+s)+s\\
&\leq s\bigl(1+2\log(1+s)\bigr).
\end{align*}
Taking \(s=W\) proves \eqref{eq:subsolution}.
\end{proof}

We now apply the Bochner identity to obtain a rigidity result.
For related splitting results involving functions with parallel
gradient, see \cite{Mendonca}.

\begin{corollary}\label{cor:gradient-rigidity}
Suppose that \(M\) is complete and connected, and \(\Ric_g\geq0\).
Let \(u\in C^3(M)\) solve \eqref{eq:eh}.
If \(|\nabla u|\) attains its global maximum at some \(p\in M\), then
\[
\nabla^2u=0,\qquad \Ric_g(\nabla u,\nabla u)=0
\quad\text{on }M.
\]
In particular, either \(u\) is constant, or \(M\) is isometric to
\((\R\times N,dt^2+g_N)\), where \(N\) is complete and connected
and \(\Ric_{g_N}\geq0\).
Under this isometry,
\[
u(t,y)=at+b,\qquad a=|\nabla u|(p)>0,\qquad b=u(p).
\]
\end{corollary}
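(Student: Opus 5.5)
We prove Corollary~\ref{cor:gradient-rigidity} by applying the strong maximum principle to \(W\), viewed as a subsolution of the divergence-form operator in Lemma~\ref{lem:bochner}. Since \(\Ric_g\geq0\), identity \eqref{eq:bochner} gives
\[
\divg(A\nabla W)=e^h\left(|\nabla^2u|^2+\Ric(\nabla u,\nabla u)+|\nabla h|^2\right)\geq0 .
\]
The operator \(L w=\divg(A\nabla w)\) is linear in \(w\) once the coefficients \(A\) are frozen. These coefficients are \(C^1\), since \(u\in C^3\), and uniformly elliptic on compact sets by \eqref{eq:ellipticity}. Because \(W=e^{|\nabla u|^2/2}-1\) is an increasing function of \(|\nabla u|\), \(W\) also attains its global maximum at \(p\). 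Hopf's strong maximum principle therefore shows that the set \(\{W=W(p)\}\) is open. It is closed by continuity, and \(M\) is connected, so \(W\) is constant on \(M\). Substituting \(\nabla W=0\) back into \eqref{eq:bochner}, the right-hand side vanishes, and each of its terms is nonnegative. Hence \(|\nabla^2u|^2=0\) and \(\Ric(\nabla u,\nabla u)=0\) everywhere, which is the first claim.

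For the splitting, \(\nabla^2u=0\) says that \(\nabla u\) is a parallel vector field, so \(|\nabla u|\equiv a\) is constant. If \(a=0\), then \(u\) is constant because \(M\) is connected. Otherwise we set \(X=\nabla u/a\), a unit parallel field. Its flow consists of isometries, since \(X\) is Killing, and it is complete because \(M\) is complete and \(X\) is bounded. The function \(t=(u-b)/a\), with \(b=u(p)\), satisfies \(|\nabla t|=1\) and \(\nabla^2t=0\). Its level set \(N=t^{-1}(0)\) is a totally geodesic hypersurface, and it is complete as a closed subset of \(M\).

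The map \(\R\times N\to M\), \((s,y)\mapsto\phi_s(y)\), where \(\phi_s\) is the flow of \(X\), is a local isometry for the product metric \(ds^2+g_N\). This holds because \(X\) is parallel, unit, and orthogonal to the level sets of \(t\). Along flow lines we have \(t(\phi_s(y))=s\), since \(X t=|\nabla t|^2=1\). It follows that the map is injective, and every point lies on a flow line that meets \(N\), so the map is surjective. Hence it is an isometry. The de~Rham-type product structure gives \(\Ric_{g_N}=\Ric_g|_{TN}\geq0\). Connectedness of \(N\) follows from connectedness of \(M\cong\R\times N\), and in these coordinates \(u=at+b\).

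The main obstacle is the first step, justifying the strong maximum principle. The cleanest route is to write \(L W=A^{ij}\nabla_i\nabla_jW+(\divg A)^j\nabla_jW\), which is a linear elliptic operator with continuous coefficients and no zeroth-order term, and then invoke the classical Hopf strong maximum principle in local charts. The global splitting step is then routine, or can be cited from \cite{Mendonca}.
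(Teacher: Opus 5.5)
Your proposal is correct and follows the paper's proof step for step. You use the same ingredients in the same order: the strong maximum principle for the subsolution \(W\) of \(\divg(A\nabla W)\geq0\), then \eqref{eq:bochner} to force \(\nabla^2u=0\) and \(\Ric_g(\nabla u,\nabla u)=0\), then the flow of the parallel unit field \(\nabla u/a\) together with the level set \(N=u^{-1}(b)\) to build the isometry \(\R\times N\to M\). The differences are only presentational: you spell out the non-divergence form needed for Hopf's principle, and you check bijectivity via \(t(\phi_s(y))=s\) rather than writing the inverse map explicitly.
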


\begin{proof}
We use \(h\), \(W\), and \(A\) defined in \eqref{eq:hAW}.
By \eqref{eq:bochner} and \(\Ric_g\geq0\), we have
\[
\divg(A\nabla W)\geq0.
\]
Set \(a=|\nabla u|(p)\). Since \(|\nabla u|\leq a\), we have
\[
|X|^2\leq\ip{AX}{X}\leq(1+a^2)|X|^2.
\]
Moreover, \(W\) attains its global maximum at \(p\).
Applying the strong maximum principle, we see that \(W\) is
constant on \(M\). Using \eqref{eq:bochner} again, we get
\[
|\nabla^2u|^2+\Ric_g(\nabla u,\nabla u)+|\nabla h|^2=0.
\]
Each term is nonnegative, so \(\nabla^2u=0\) and
\(\Ric_g(\nabla u,\nabla u)=0\).
If \(a=0\), then \(u\) is constant.

Now assume \(a>0\). Set \(V=\nabla u/a\).
Then \(V\) is a parallel unit vector field.
Its integral curves are unit-speed geodesics, so its flow
\(\phi_t\) is defined for all \(t\in\R\).
Since \(V\) is parallel, each \(\phi_t\) is an isometry.
Moreover, 
\[
u(\phi_t(x))=u(x)+at.
\]
Set \(b=u(p)\) and \(N=u^{-1}(b)\), with the induced metric \(g_N\).
Since \(du\neq0\), the level set \(N\) is a smooth hypersurface.
Define
\[
F:\R\times N\longrightarrow M,\qquad F(t,y)=\phi_t(y).
\]
This map has the inverse
\[
F^{-1}(x)=\bigl(s(x),\phi_{-s(x)}(x)\bigr),
\qquad s(x)=\frac{u(x)-b}{a}.
\]
Thus \(F\) is a global diffeomorphism.
Since \(TN=V^\perp\) and the flow preserves both \(g\) and \(V\),
we obtain
\[
F^*g=dt^2+g_N,\qquad u(F(t,y))=at+b.
\]
It follows from this product decomposition that \(N\) is complete
and connected. The product formula for the Ricci tensor also gives
\(\Ric_{g_N}\geq0\).
\end{proof}

\begin{remark}\label{rem:gradient-rigidity}
If, in addition, \(\Ric_g\) is positive definite at one point,
then \(\Ric_g(\nabla u,\nabla u)=0\) implies that \(u\) is constant.
\end{remark}

\section{A mean-value estimate}\label{sec:iteration}

In this section, we prove a mean-value estimate to be used with
\eqref{eq:subsolution}. We keep the dependence on the upper
ellipticity bound explicit. A key tool is the local Sobolev
inequality of Saloff-Coste, which we recall first.

Suppose that $(M^n,g)$ is complete and $\Ric_g\geq-Kg$,
where $K\geq0$.
Let $B=B_r(x)$ and $V=\Vol_g(B)$.
For $n>2$, set $q=n/(n-2)$.
By \cite[Theorem~3.1]{SC},
\[
\left(\int_B|\varphi|^{2q}\dd\right)^{1/q}
\leq e^{C_n(1+\sqrt K\,r)}V^{-2/n}
\left(
r^2\int_B|\nabla\varphi|^2\dd+
\int_B|\varphi|^2\dd
\right),
\quad \varphi\in C_c^\infty(B).
\]
Here $C_n$ depends only on $n$.
For $n=2$, the same estimate holds with $n$ replaced by any
fixed number $n'>2$.
In this case, $q=n'/(n'-2)$, the volume factor is $V^{-2/n'}$,
and the constant depends on $n'$.

We now write this inequality using normalized integrals.
Set
\begin{equation}\label{eq:exponents}
d=n+2,\qquad \chi=\frac{d}{d-2}.
\end{equation}
Take $r=2\rho$ and assume that $K\rho^2\leq1$.
When $n=2$, choose a fixed number $n'=4$, so that $q=2=\chi$.
When $n>2$, keep $q=n/(n-2)$. Dividing the Sobolev inequality by $V^{1/q}$ converts all
integrals into normalized integrals.
Indeed, $1/q+2/n=1$ when $n>2$, and
$1/q+2/n'=1$ when $n=2$.
In both cases, $\chi\leq q$.
Hence H\"older's inequality with respect to normalized volume gives
\[
\left(\fint_B|\varphi|^{2\chi}\dd\right)^{1/\chi}
\leq
\left(\fint_B|\varphi|^{2q}\dd\right)^{1/q}.
\]
Since $r=2\rho$ and $\sqrt K\,r\leq2$, we obtain
\begin{equation}\label{eq:sobolev}
\left(\fint_B|\varphi|^{2\chi}\dd\right)^{1/\chi}
\leq C_n\left(
\rho^2\fint_B|\nabla\varphi|^2\dd+
\fint_B|\varphi|^2\dd
\right),
\qquad \varphi\in C_c^\infty(B).
\end{equation}
By density, \eqref{eq:sobolev} also holds for compactly
supported Lipschitz functions.
The zero-order term allows us to use the same inequality
when $M$ is compact and $B=M$.

\begin{lemma}\label{lem:mean-value}
Suppose that \(\Ric_g\geq-Kg\) with \(K\geq0\), and \(K\rho^2\leq1\).
Let \(B=B_{2\rho}(x)\) and \(d=n+2\).
Suppose that \(A\) is a \(C^1\) symmetric endomorphism field on \(B\)
and
\[
|X|^2\leq\ip{AX}{X}\leq\lambda|X|^2
\]
for a measurable function \(\lambda\geq1\).
Let \(w\in C^2(B)\) be nonnegative and satisfy
\begin{equation}\label{eq:abstract-subsolution}
\divg(A\nabla w)\geq-2K\lambda w
\end{equation}
in the weak sense.
If
\[
\Lambda=\left(\fint_B\lambda^d\dd\right)^{1/d}<\infty,
\qquad m=\fint_Bw\dd<\infty,
\]
then
\begin{equation}\label{eq:mean-value}
\sup_{B_\rho(x)}w\leq C_n\Lambda^d m.
\end{equation}
\end{lemma}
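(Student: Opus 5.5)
We run a Moser iteration for the subsolution $w$ on the shrinking balls $B_{\rho_j}(x)$ with $\rho_j=\rho(1+2^{-j})$. The unbounded ellipticity ratio $\lambda$ enters only through Hölder's inequality against $\fint_B\lambda^d\dd$. The scale and the volume normalization are controlled by \eqref{eq:sobolev} together with the doubling bound \eqref{eq:doubling}, so all normalized averages over $B_{\rho_j}$ and over $B$ are comparable up to $C_n$.

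\textbf{Caccioppoli step.} Fix $\beta\geq1$ and a Lipschitz cutoff $\eta$ supported in $B_{r}(x)$, with $\eta=1$ on $B_{r'}(x)$ and $|\nabla\eta|\leq 2/(r-r')$. Test \eqref{eq:abstract-subsolution} with $\eta^2 w^{\beta}$, which is admissible since $w\geq0$ and $w\in C^2$. The lower bound $\ip{AX}{X}\geq|X|^2$ controls $\beta\int\eta^2w^{\beta-1}|\nabla w|^2$. The cross term $2\int\eta w^\beta\ip{A\nabla w}{\nabla\eta}$ is handled by Cauchy--Schwarz for the positive form $A$:
\[
|\ip{A\nabla w}{\nabla\eta}|\leq \ip{A\nabla w}{\nabla w}^{1/2}\ip{A\nabla\eta}{\nabla\eta}^{1/2},
\qquad \ip{A\nabla\eta}{\nabla\eta}\leq\lambda|\nabla\eta|^2.
\]
The zero-order term contributes $2K\lambda\eta^2w^{\beta+1}$, and $K\leq\rho^{-2}$. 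Writing $v=w^{(\beta+1)/2}$, this gives
\[
\fint_B|\nabla(\eta v)|^2\dd\leq C_n\beta\fint_B\lambda\left(|\nabla\eta|^2+\rho^{-2}\eta^2\right)v^2\dd .
\]
Every error term therefore carries exactly one factor of $\lambda$.

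\textbf{Sobolev and Hölder step.} Apply \eqref{eq:sobolev} to $\eta v$. On the right-hand side, use Hölder with exponents $d$ and $d'=d/(d-1)$:
\[
\fint\lambda v^2\leq\Lambda\Big(\fint v^{2d'}\Big)^{1/d'}.
\]
This produces the reverse Hölder inequality
\[
\Big(\fint_{B_{r'}}v^{2\chi}\Big)^{1/\chi}\leq \frac{C_n\beta\Lambda\rho^2}{(r-r')^2}\Big(\fint_{B_r}v^{2d'}\Big)^{1/d'} .
\]
Since $\chi=d/(d-2)>d'=d/(d-1)$, the gain per step is the ratio $\theta=\chi/d'=(d-1)/(d-2)>1$. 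Iterating with exponents $p_{j+1}=\theta p_j$ and $r_j-r_{j+1}\sim\rho2^{-j}$ gives
\[
\sup_{B_\rho}w\leq C_n\Lambda^{\sum_j 1/p_j}\Big(\fint_B w^{p_0}\Big)^{1/p_0}.
\]
With $p_0=2d'$, the exponent of $\Lambda$ is $\sum_j \theta^{-j}/p_0=(d-1)/p_0=(d-2)/2$, up to a bounded factor.

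\textbf{Reduction to the $L^1$ average.} This is the main obstacle. The iteration needs a starting exponent $p_0>1$, but we only control $m=\fint w$. I would handle it with the standard interpolation trick. First run the iteration to get
\[
\sup_{B_{r'}}w\leq C_n\Lambda^{\gamma}(r-r')^{-\kappa}\rho^{\kappa}\Big(\fint_{B_r}w^{p_0}\Big)^{1/p_0}.
\]
Then bound
\[
\fint w^{p_0}\leq(\sup_{B_r} w)^{p_0-1}\fint w ,
\]
apply Young's inequality to absorb $\tfrac12\sup_{B_r}w$, and remove the absorbed term with the usual iteration lemma over radii $\rho\leq r'<r\leq2\rho$. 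Absorption multiplies the power of $\Lambda$ by $p_0/(p_0-(p_0-1))=p_0$, giving a total exponent $\gamma p_0\approx (d-2)p_0/2$. With $p_0=2d'$ this comes to roughly $d(d-2)/(d-1)\leq d$. Since $\Lambda\geq1$, any exponent at most $d$ can be raised to $d$, which yields \eqref{eq:mean-value}. The remaining care is to track the exponents so that they stay at most $d$, and to keep every constant dependent only on $n$ via \eqref{eq:doubling}.
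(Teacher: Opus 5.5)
Your proposal is correct and is essentially the paper's proof. It uses the same Caccioppoli inequality carrying one factor of $\lambda$, the normalized Sobolev inequality \eqref{eq:sobolev}, and H\"older with exponents $d$ and $d/(d-1)$, which gives the gain $(d-1)/(d-2)$ and the starting exponent $p_0=2d/(d-1)$. It then reduces to $m$ by interpolation, Young, and hole-filling; the paper runs that last step on radii in $[\rho,3\rho/2]$ so that $\sup w$ is finite.

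One bookkeeping fix: the power of $\Lambda$ at each step is $1/(\beta+1)=d'/p_j$, not $1/p_j$. So the $L^{p_0}\to L^\infty$ bound carries $\Lambda^{(d-1)/2}$, and absorption gives exactly $\Lambda^{(d-1)p_0/2}=\Lambda^{d}$. There is no slack, but the claimed estimate still holds.
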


\begin{proof}
First replace \(w\) by \(w+\varepsilon\) with \(\varepsilon>0\).
The same differential inequality holds, since
\[
\divg(A\nabla(w+\varepsilon))
\geq-2K\lambda w\geq-2K\lambda(w+\varepsilon).
\]
The coefficient field \(A\) and the function \(\lambda\) remain fixed.
We prove the estimate for positive \(w\), with constants independent
of \(\varepsilon\), and then let \(\varepsilon\downarrow0\).

All norms in this proof are taken with respect to the normalized measure
\[
d\nu=\frac{dV_g}{\Vol(B)}.
\]
For \(U\subset B\), write
\[
\|f\|_{a,U}=\left(\int_U|f|^a\,d\nu\right)^{1/a}.
\]

We fix \(\rho\leq s<t<2\rho\) and choose a Lipschitz
cutoff function \(\eta\) such that
\[
0\leq\eta\leq1,\qquad
\eta=1\ \text{on }B_s(x),\qquad
\operatorname{supp}\eta\subset B_t(x),\qquad
|\nabla\eta|\leq\frac2{t-s}.
\]
For \(\beta\geq2\), testing \eqref{eq:abstract-subsolution}
with \(\eta^2w^{\beta-1}\), we obtain
\begin{equation}\label{eq:test}
\begin{split}
(\beta-1)\int_B\eta^2w^{\beta-2}
 \ip{A\nabla w}{\nabla w}\,d\nu
&\leq
2\int_B\eta w^{\beta-1}
 |\ip{A\nabla w}{\nabla\eta}|\,d\nu
\\
&\quad+2K\int_B\lambda\eta^2w^\beta\,d\nu.
\end{split}
\end{equation}
Cauchy's inequality in the \(A\)-inner product yields
\begin{equation}\label{eq:caccioppoli}
\begin{split}
\int_B\eta^2w^{\beta-2}
 \ip{A\nabla w}{\nabla w}\,d\nu
&\leq\frac4{(\beta-1)^2}
 \int_Bw^\beta\ip{A\nabla\eta}{\nabla\eta}\,d\nu
\\
&\quad+\frac{4K}{\beta-1}
\int_B\lambda\eta^2w^\beta\,d\nu.
\end{split}
\end{equation}
Since \(A\geq I\) and \(\beta/(\beta-1)\leq2\), we also have
\begin{equation}\label{eq:product-energy}
\begin{split}
\int_B|\nabla(\eta w^{\beta/2})|^2\,d\nu&\leq 2\int_Bw^\beta|\nabla\eta|^2\,d\nu+\frac{\beta^2}{2}
 \int_B\eta^2w^{\beta-2}
 |\nabla w|^2\,d\nu\\
&\leq2\int_Bw^\beta|\nabla\eta|^2\,d\nu+\frac{\beta^2}{2}
 \int_B\eta^2w^{\beta-2}
 \ip{A\nabla w}{\nabla w}\,d\nu\\
 &\leq10\int_B\lambda w^\beta|\nabla\eta|^2\,d\nu+4K\beta\int_B\lambda\eta^2w^\beta\,d\nu.
\end{split}
\end{equation}
Then we apply \eqref{eq:sobolev} to \(\eta w^{\beta/2}\).
Using \(\lambda\geq1\) and \eqref{eq:product-energy}, we obtain
\begin{align*}
\|w\|_{\beta\chi,B_s}^{\beta}
&\leq C_n\left[
 \frac{\rho^2}{(t-s)^2}+K\rho^2\beta+1
 \right]\int_{B_t}\lambda w^\beta\,d\nu\\
&\leq C_n\beta
 \left(\frac{\rho}{t-s}\right)^2
 \int_{B_t}\lambda w^\beta\,d\nu.
\end{align*}
In the last step, we use \(K\rho^2\leq1\) and \(t-s\leq\rho\).
Set
\[
q=\frac d{d-1},\qquad
\sigma=\frac\chi q=\frac{d-1}{d-2}>1.
\]
By H\"older's inequality,
\begin{equation}\label{eq:iteration-step}
\|w\|_{\beta\chi,B_s}
\leq
\left[
 C_n\beta\Lambda
 \left(\frac{\rho}{t-s}\right)^2
\right]^{1/\beta}
\|w\|_{\beta q,B_t}.
\end{equation}

For fixed \(s<t\), take
\[
t_j=s+(t-s)2^{-j},\qquad
\beta_j=2\sigma^j,\qquad p_j=q\beta_j,
\qquad j=0,1,\ldots.
\]
Then \(\beta_j\chi=p_{j+1}\).
Applying \eqref{eq:iteration-step} successively from
\(B_{t_j}(x)\) to \(B_{t_{j+1}}(x)\), we get
\[\|w\|_{p_{j+1},B_{t_{j+1}}}
\leq
\left[
 C_n\beta_j\Lambda
 \left(\frac{2^{j+1}\rho}{t-s}\right)^2
\right]^{1/\beta_j}
\|w\|_{p_{j},B_{t_{j}}}.\]
Note that
\[
\sum_{j=0}^{\infty}\frac1{\beta_j}=\frac{d-1}{2},
\qquad
\sum_{j=0}^{\infty}\frac{j+1}{\beta_j}<\infty,
\qquad
\sum_{j=0}^{\infty}\frac{\log\beta_j}{\beta_j}<\infty.
\]
Thus, with \(p_0=2d/(d-1)\), we obtain
\begin{equation}\label{eq:lp-to-linfty}
H(s):=\sup_{B_s(x)}w=\|w\|_{\infty,B_s}
\leq C_n\Lambda^{(d-1)/2}
 \left(\frac{\rho}{t-s}\right)^{d-1}
 \|w\|_{p_0,B_t}.
\end{equation}

Next we reduce the right-hand side to \(L^1\).
Since
\[
\|w\|_{p_0,B_t}
\leq H(t)^{1-1/p_0}m^{1/p_0},
\]
Young's inequality gives, for every fixed \(\theta\in(0,1)\),
\begin{equation}\label{eq:hole-filling}
H(s)\leq\theta H(t)+
C_{n,\theta}\Lambda^d m
\left(\frac{\rho}{t-s}\right)^{2d}
\end{equation}
where we use
\[
\frac{d-1}{2}p_0=d,\qquad (d-1)p_0=2d.
\]
Choose
\[
\theta=2^{-(2d+1)},\qquad
r_j=\frac{3\rho}{2}-\frac{\rho}{2^{j+1}}.
\]
Then \(r_0=\rho\), \(r_j\uparrow3\rho/2\), and
\(r_{j+1}-r_j=\rho2^{-(j+2)}\).
Iterating \eqref{eq:hole-filling}, we get
\[
H(\rho)\leq\theta^JH(r_J)
+C_n\Lambda^d m
 \sum_{j=0}^{J-1}\theta^j2^{2d(j+2)}.
\]
The series converges because \(\theta2^{2d}=1/2\).
Also, \(w\) is bounded on \(\overline{B_{3\rho/2}(x)}\).
Letting \(J\to\infty\) proves \eqref{eq:mean-value}
for positive \(w\).
Finally, letting \(\varepsilon\downarrow0\), we finish the proof.
\end{proof}

\begin{proposition}\label{prop:energy-mv}
Suppose that \(\Ric_g\geq-Kg\) and \(K\rho^2\leq1\).
Let \(u\in C^3(B_{4\rho}(x))\) solve \eqref{eq:eh}.
Set
\[
W=e^{|\nabla u|^2/2}-1,\qquad
m=\fint_{B_{2\rho}(x)}W\dd.
\]
Then
\begin{equation}\label{eq:energy-mv}
\sup_{B_\rho(x)}W\leq C_nm(1+m).
\end{equation}
\end{proposition}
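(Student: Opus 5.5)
The plan is to apply Lemma~\ref{lem:mean-value} directly to \(w=W\) on \(B=B_{2\rho}(x)\), with the coefficient field \(A\) from \eqref{eq:hAW} and the ellipticity function \(\lambda=1+2\log(1+W)\) from Lemma~\ref{lem:bochner}. The only real work is to bound the quantity \(\Lambda^d\) appearing in \eqref{eq:mean-value} by \(C_n(1+m)\). Then \eqref{eq:mean-value} gives \(\sup_{B_\rho(x)}W\leq C_n\Lambda^d m\leq C_n m(1+m)\), which is \eqref{eq:energy-mv}.

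\emph{Hypotheses of Lemma~\ref{lem:mean-value}.} Since \(u\in C^3(B_{4\rho}(x))\), the field \(A(X)=X+\ip{\nabla u}{X}\nabla u\) is \(C^2\), hence \(C^1\), and symmetric on \(B\). Similarly, \(W=e^{|\nabla u|^2/2}-1\) is \(C^2\) and nonnegative. The two-sided bound \(|X|^2\leq\ip{AX}{X}\leq\lambda|X|^2\) with \(\lambda\geq1\) is \eqref{eq:ellipticity}. The differential inequality \(\divg(A\nabla W)\geq-2K\lambda W\) is \eqref{eq:subsolution}. It holds pointwise, hence in the weak sense after integrating by parts against nonnegative Lipschitz test functions with compact support in \(B\). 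Because \(\overline{B_{2\rho}(x)}\subset B_{4\rho}(x)\) is compact, \(W\) and \(\lambda\) are bounded on \(B\). Therefore \(m<\infty\) and \(\Lambda<\infty\).

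\emph{Bounding \(\Lambda\).} The key elementary inequality is that for \(s\geq0\),
\[
\bigl(1+2\log(1+s)\bigr)^d\leq C_n(1+s),\qquad d=n+2.
\]
This holds because \(\log(1+s)\) grows more slowly than any positive power of \(1+s\). Concretely, \(1+2\log(1+s)\leq c_n(1+s)^{1/d}\), using \(\log y\leq d\,y^{1/d}\) for \(y\geq1\). Applying this inequality with \(s=W\) and averaging over \(B\) gives
\[
\Lambda^d=\fint_B\lambda^d\dd\leq C_n\fint_B(1+W)\dd=C_n(1+m).
\]
Substituting this into \eqref{eq:mean-value} finishes the proof.

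The only potential obstacle is the apparent circularity: the coefficient bound \(\lambda\) depends on the solution itself. This is exactly why Lemma~\ref{lem:mean-value} was stated with \(\Lambda\) given as an \(L^d\)-average rather than an \(L^\infty\) bound. The logarithmic dependence of \(\lambda\) on \(W\) then lets \(\lambda^d\) be absorbed into \(W\) linearly, at the cost of the extra factor \((1+m)\). No further iteration should be needed.
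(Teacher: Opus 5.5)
Your proposal is correct and follows the paper's proof essentially verbatim. Like the paper, you apply Lemma~\ref{lem:mean-value} to $W$ with $A$ and $\lambda=1+2\log(1+W)$ from Lemma~\ref{lem:bochner}, and you absorb $\Lambda^d$ into $C_n(1+m)$ via $[1+2\log(1+s)]^d\leq c_d(1+s)$; your justification of that inequality through $\log y\leq d\,y^{1/d}$ and your check of the regularity and weak-sense hypotheses are correct details the paper leaves implicit.
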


\begin{proof}
The integral defining \(m\) is finite because
\(\overline{B_{2\rho}(x)}\) is a compact subset of \(B_{4\rho}(x)\).
By Lemma~\ref{lem:bochner}, \(W\) satisfies
\[
\divg(A\nabla W)\geq-2K\lambda W,\qquad
I\leq A\leq\lambda I,\qquad
\lambda=1+2\log(1+W).
\]
For the fixed exponent \(d=n+2\), there is a constant \(c_d\) such that
\[
[1+2\log(1+s)]^d\leq c_d(1+s),\qquad s\geq0.
\]
It follows that
\[
\Lambda^d=\fint_{B_{2\rho}(x)}\lambda^d\dd
\leq c_d(1+m).
\]
Applying Lemma~\ref{lem:mean-value} to \(W\) yields \eqref{eq:energy-mv}.
\end{proof}

\section{The oscillation gradient estimate}\label{sec:gradient}

We first prove the estimate on balls satisfying \(K\rho^2\leq1\).

\begin{proposition}\label{prop:small-ball}
Suppose \(\Ric_g\geq-Kg\) and \(K\rho^2\leq1\).
If \(u\in C^3(B_{4\rho}(x))\cap L^\infty(B_{4\rho}(x))\)
solves \eqref{eq:eh}, then
\begin{equation}\label{eq:small-ball}
\sup_{B_\rho(x)}|\nabla u|
\leq\frac{C_n}{\rho}\osc_{B_{4\rho}(x)}u.
\end{equation}
\end{proposition}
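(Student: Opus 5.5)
The plan is to combine Lemma~\ref{lem:energy} with Proposition~\ref{prop:energy-mv}. The only real difficulty is that the energy bound \eqref{eq:average-energy} grows like \(e^{S^2/(32\rho^2)}\), which is not linear in \(S/\rho\). So we first treat the regime where \(S/\rho\) is small, and then remove that restriction by a rescaling/covering argument. Throughout, write \(S=\osc_{B_{4\rho}(x)}u\) and \(\tau=S/\rho\).

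\emph{Step 1: the small-oscillation regime \(\tau\leq1\).}
By \eqref{eq:average-energy},
\[
m=\fint_{B_{2\rho}(x)}W\dd\leq a_n\bigl(e^{\tau^2/32}-1\bigr)\leq C_n\tau^2,
\]
where we use \(e^s-1\leq 2s\) for \(s\leq 1\). Hence \(m\leq C_n\). Proposition~\ref{prop:energy-mv} then gives
\[
\sup_{B_\rho(x)}W\leq C_n m(1+m)\leq C_n\tau^2.
\]
Since \(W\geq|\nabla u|^2/2\), this yields \(\sup_{B_\rho(x)}|\nabla u|\leq C_n\tau\), which is \eqref{eq:small-ball}.

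\emph{Step 2: the large-oscillation regime \(\tau>1\).}
Here the exponential bound is useless, so we shrink the scale. Set \(\rho'=\rho/(L\tau)\), where \(L\geq 1\) is a dimensional constant to be fixed. For any \(y\in B_\rho(x)\) we have \(B_{4\rho'}(y)\subset B_{4\rho}(x)\), because \(\rho+4\rho'<4\rho\). Also \(K\rho'^2\leq K\rho^2\leq1\). Consequently
\[
\frac{\osc_{B_{4\rho'}(y)}u}{\rho'}\leq \frac{S}{\rho'}=L\tau^2,
\]
which is still large, so naive shrinking fails. This is the main obstacle: the crude oscillation bound on the small ball is not improved by shrinking alone.

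\emph{Step 3: getting actual smallness on the small balls.}
First I would check whether some other mechanism bounds \(\osc_{B_{4\rho'}(y)}u\) by a multiple of \(\rho'\). A first attempt is to use Step 1 at the scale where the oscillation is comparable to the radius. Define
\[
r(y)=\sup\Bigl\{r\leq\rho:\ \osc_{B_{4r}(y)}u\leq r\Bigr\}.
\]
At that scale Step 1 gives \(|\nabla u|\leq C_n\) near \(y\), but this only produces a bound of order \(C_n\), not \(C_n\tau\).

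The more promising route is to exploit the homogeneity of the expected conclusion. Let \(G=\sup_{B_{2\rho}(x)}|\nabla u|\), weighted by a distance-to-boundary factor, and argue by a blow-up/contradiction or a weighted-sup argument. On a ball of radius \(r\sim 1/G\) around a near-maximal point, the oscillation is at most \(\sim 1\) by the mean value theorem. Step 1 then applies and controls \(|\nabla u|\) there by \(C_n\). This is exactly compatible with \(G\), and it shows the energy is of order one at that scale.

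To tie \(G\) to \(\tau\), I would instead compare \(u\) with \(q_\pm\) as in Lemma~\ref{lem:energy}, with slopes \(\tau\). Minimality together with convexity gives
\[
\int_{D_\pm}e^{|\nabla u|^2/2}\ip{\nabla u}{\nabla u-\nabla q_\pm}\leq0.
\]
This yields an integral bound on \(e^{h}|\nabla u|^2\) weighted against \(\tau|\nabla u|e^h\). Hence
\[
\fint|\nabla u|^2e^h\lesssim \tau\fint|\nabla u|e^h,
\]
so the mass of \(e^h\) sits where \(|\nabla u|\lesssim\tau\).

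\emph{Step 4: closing the argument.}
Finally, I would run the Moser iteration (Lemma~\ref{lem:mean-value}) on \(w=|\nabla u|^{2}\)-type quantities, or on \(W\) normalized by \(e^{C\tau^2}\). The aim is a linear-in-\(\tau\) bound in place of the exponential one. I expect this step, replacing the exponential energy bound with one linear in \(\tau\), to be the crux, and I would attempt it first via this first-variation inequality.
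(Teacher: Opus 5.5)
\textbf{There is a genuine gap.} Your Step~1 is correct. Steps~2--4, however, do not prove anything for $\tau=S/\rho>1$. They list possible strategies (a stopping scale, a blow-up, a first-variation inequality, a renormalized Moser iteration), but none is carried to a conclusion, so \eqref{eq:small-ball} is not established in the large-oscillation regime.

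Moreover, the target you set in Step~4, an energy bound linear or polynomial in $\tau$ in place of the exponential one, cannot hold. Take $u(y)=a\cdot y$ on $\R^n$. Then $\tau=8|a|$ and $W\equiv e^{|a|^2/2}-1=e^{\tau^2/128}-1$. The exponential size of the energy bound is therefore sharp, not a defect. For the same reason, your first-variation identity in Step~3, though valid (it is actually an equality), cannot yield polynomial control of $W$.

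\textbf{The missing observation} is that the exponential bound is exactly what you need, because the gradient is recovered from $W$ through a logarithm: $|\nabla u|^2=2\log(1+W)$. Write $z=S^2/(32\rho^2)$ and combine \eqref{eq:average-energy} with Proposition~\ref{prop:energy-mv}. Using $a_n\geq1$, $1+m\leq a_ne^z$ and $e^z(e^z-1)\leq e^{2z}-1$, you get
\[
\sup_{B_\rho(x)}W\leq C_na_n(e^z-1)\,a_ne^z\leq D_n\bigl(e^{2z}-1\bigr),\qquad D_n\geq1 .
\]
Bernoulli's inequality gives $1+D(y-1)\leq y^D$ for $y\geq1$ and $D\geq1$. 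With $y=e^{2z}$ this yields
\[
\sup_{B_\rho(x)}|\nabla u|^2\leq2\log\bigl(1+D_n(e^{2z}-1)\bigr)\leq4D_nz=\frac{D_n}{8}\frac{S^2}{\rho^2}.
\]
This holds for every value of $\tau$, with no case distinction. In Step~1 you threw away exactly this structure by using only $W\geq|\nabla u|^2/2$. Replacing that inequality with the exact identity $|\nabla u|^2=2\log(1+W)$ closes the proof immediately, and this is the paper's argument.
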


\begin{proof}
Set
\[
S=\osc_{B_{4\rho}(x)}u,\qquad
z=\frac{S^2}{32\rho^2},\qquad
m=\fint_{B_{2\rho}(x)}W\dd.
\]
Applying Lemma~\ref{lem:energy}, we obtain
\[
m\leq a_n(e^z-1),\qquad a_n\geq1.
\]
Together with Proposition~\ref{prop:energy-mv}, this yields
\begin{align*}
\sup_{B_\rho(x)}W
&\leq C_na_n(e^z-1)\bigl[1+a_n(e^z-1)\bigr]\\
&\leq D_n(e^{2z}-1)
\end{align*}
for some constant \(D_n\geq1\).
In the last inequality, we use
\[
1+a_n(e^z-1)\leq a_ne^z,\qquad
e^z(e^z-1)\leq e^{2z}-1.
\]
Since \(|\nabla u|^2=2\log(1+W)\), we conclude that
\[
\begin{split}
\sup_{B_\rho(x)}|\nabla u|^2&=2\log(1+\sup_{B_\rho(x)} W)\\
&\leq 2\log(1+D_n(e^{2z}-1)) \leq 4D_nz
=\frac{D_n}{8}\frac{S^2}{\rho^2}.
\end{split}
\]
Taking square roots proves \eqref{eq:small-ball}.
\end{proof}

\begin{proof}[Proof of Theorem~\ref{thm:gradient}]
Fix \(x\in B_R(p)\).
If \(K>0\), set
\[
\rho=\min\left\{\frac R2,\frac1{\sqrt K}\right\}.
\]
If \(K=0\), set \(\rho=R/2\).
In both cases,
\[
K\rho^2\leq1,\qquad
B_{4\rho}(x)\subset B_{3R}(p)\subset B_{4R}(p).
\]
Proposition~\ref{prop:small-ball} therefore gives
\[
|\nabla u(x)|
\leq\frac{C_n}{\rho}\osc_{B_{4\rho}(x)}u
\leq\frac{C_n}{\rho}\osc_{B_{4R}(p)}u.
\]
Moreover,
\[
\frac1\rho\leq\frac2R+\sqrt K.
\]
Taking the supremum over \(B_R(p)\) and adjusting the constant \(C_n\)
proves \eqref{eq:gradient}.
\end{proof}

\section{The one-sided gradient estimate}\label{sec:one-sided}

In this section, we prove Theorem \ref{thm:one-sided}. We first bound a positive solution by its value at a single point.
Set
\begin{equation}\label{eq:flux-function}
a(s)=s e^{s^2/2},\qquad s\geq0,
\qquad k=\max\{n-1,2\}.
\end{equation}
The function \(a\) is strictly increasing from \([0,\infty)\)
onto \([0,\infty)\), so its inverse \(a^{-1}\) is well defined.
Also,
\begin{equation}\label{eq:Ik}
I_k:=\int_0^\infty a(s)^{-1/k}\,ds
=\int_0^\infty s^{-1/k}e^{-s^2/(2k)}\,ds<\infty.
\end{equation}
For every \(Q>0\), Tonelli's
theorem gives
\begin{equation}\label{eq:inverse-flux-integral}
\begin{split}
\int_0^\infty a^{-1}(Qt^{-k})\,dt
&=\int_0^\infty\int_0^\infty
  \mathbf{1}_{\{a(s)<Qt^{-k}\}}\,ds\,dt\\
&=Q^{1/k}\int_0^\infty a(s)^{-1/k}\,ds
=I_kQ^{1/k}.
\end{split}
\end{equation}

\begin{lemma}\label{lem:positive-bound}
Suppose that \(\Ric_g\geq-Kg\) with \(K\geq0\).
Let \(v\in C^3(B_{3L}(z))\) be a positive exponentially harmonic
function. Set \(b=\sqrt{(n-1)K}\). Then
\begin{equation}\label{eq:positive-bound}
\sup_{B_L(z)}v
\leq 2I_k e^{2bL/k}L
       \left[a\!\left(\frac{v(z)}L\right)\right]^{1/k}.
\end{equation}
\end{lemma}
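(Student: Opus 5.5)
We prove \eqref{eq:positive-bound} by comparing \(v\) with a radial barrier built from the flux function \(a\). The barrier is centred at an arbitrary point \(y\in B_L(z)\). Its profile is the \(1\)-dimensional "fundamental solution" \(G_Q(r)=\int_r^\infty\gamma_Q(t)\,dt\), where \(\gamma_Q\) is determined by prescribing the flux:
\[
a(\gamma_Q(t))=Q\,t^{-k}e^{-\mu t},
\]
with a constant \(\mu\) of order \(b\) chosen below. By \eqref{eq:inverse-flux-integral}, and since dropping \(e^{-\mu t}\) only enlarges the integrand, \(G_Q(0)\leq I_kQ^{1/k}<\infty\). So the barrier is bounded at its centre, and this is exactly where the integrability \eqref{eq:Ik} enters.

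\emph{Subsolution property.} Let \(r=d_g(y,\cdot)\). By Laplacian comparison, \(\Delta r\leq (n-1)/r+b\leq k/r+b\) in the barrier sense. Set \(U=G_Q(r)-G_Q(2L)\). Since \(U'=-\gamma_Q<0\) and \(|\nabla r|=1\), away from \(y\) and the cut locus we get
\[
\divg\bigl(e^{|\nabla U|^2/2}\nabla U\bigr)=-\frac{d}{dr}\bigl(Qr^{-k}e^{-\mu r}\bigr)-Qr^{-k}e^{-\mu r}\Delta r
=Qr^{-k}e^{-\mu r}\Bigl(\frac kr+\mu-\Delta r\Bigr)\geq0
\]
once \(\mu\geq b\). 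Because \(U\) is a decreasing function of \(r\), the inequality \(\Delta r\leq k/r+b\) holds in the distributional sense on \(M\setminus\{y\}\) and has the right sign. Hence \(U\) is a weak subsolution of \eqref{eq:eh} on \(B_{2L}(y)\setminus\{y\}\); the cut locus is handled by Calabi's trick. The precise choice of \(\mu\) (I expect \(\mu=2b\), giving the factor \(e^{2bL/k}\) after taking \(k\)-th roots) is fixed when the weight is tracked in the last step.

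\emph{Comparison.} Note \(B_{2L}(y)\subset B_{3L}(z)\). Since \(Q\mapsto G_Q(0)-G_Q(2L)\) is continuous and increasing from \(0\) to \(\infty\), choose \(Q\) so that \(U(y)=v(y)\). On \(\partial B_{2L}(y)\) we have \(U=0<v\), and at the puncture \(U(y)=v(y)\). The weak comparison principle for the monotone operator \(\divg(e^{|\nabla\cdot|^2/2}\nabla\cdot)\), tested with \((U-v-\varepsilon)_+\), which vanishes near \(y\) and near \(\partial B_{2L}(y)\), then gives \(v\geq U\) on \(B_{2L}(y)\). This uses the strict monotonicity \eqref{eq:convex-second} of the vector field \(e^{|\xi|^2/2}\xi\).

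\emph{Conclusion.} Since \(d(y,z)<L\) and \(\gamma_Q\) is decreasing,
\[
v(z)\geq U(z)\geq\int_L^{2L}\gamma_Q\,dt\geq L\,\gamma_Q(2L),
\]
so \(a(v(z)/L)\geq Q(2L)^{-k}e^{-2\mu L}\). Therefore \(Q\leq 2^kL^ke^{2\mu L}a(v(z)/L)\), and
\[
v(y)=U(y)\leq G_Q(0)\leq I_kQ^{1/k}\leq 2I_kLe^{2\mu L/k}\bigl[a(v(z)/L)\bigr]^{1/k}.
\]
Taking the supremum over \(y\in B_L(z)\) yields \eqref{eq:positive-bound}, provided the weight works out with \(\mu\leq b\) (or whatever normalization produces \(e^{2bL/k}\)).

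The main obstacle is the rigorous comparison step. It has three parts: justifying the subsolution property across the cut locus of \(y\), handling the point singularity at \(y\) (where \(U\) is only continuous, though bounded), and matching the exact exponential factor coming from the curvature term in \(\Delta r\).
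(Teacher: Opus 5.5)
Your proposal is correct and matches the paper's proof step for step: the same flux-prescribed barrier \(a(\gamma_Q(t))=Qt^{-k}e^{-\mu t}\), distributional Laplacian comparison for a decreasing radial function, comparison via monotonicity of \(e^{|\xi|^2/2}\xi\), and evaluation at \(z\). Your \(\varepsilon\)-shift in the test function does the job of the paper's \(f_Q(0)=\alpha v(y)\) with \(\alpha\uparrow1\). The weight loose end closes with \(\mu=b\) exactly, since your two constraints \(\mu\geq b\) and \(\mu\leq b\) force it; the factor \(2\) in \(e^{2bL/k}\) comes from evaluating \(e^{-\mu t}\) at \(t=2L\), not from \(\mu=2b\), and \(\mu=2b\) would only give the weaker \(e^{4bL/k}\).
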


\begin{proof}
Fix \(y\in B_L(z)\), and write \(r=d_g(y,\cdot)\).
The closed ball \(\overline{B_{2L}(y)}\) is a compact subset of
\(B_{3L}(z)\).
For \(Q>0\), define
\begin{equation}\label{eq:radial-profile}
f_Q(r)=\int_r^{2L}a^{-1}(Qt^{-k}e^{-bt})\,dt,
\qquad 0\leq r\leq2L.
\end{equation}
By \eqref{eq:inverse-flux-integral},
\begin{equation}\label{eq:profile-height}
0<f_Q(0)\leq I_kQ^{1/k}.
\end{equation}
Thus \(h_Q=f_Q\circ r\) is continuous at \(y\).
It is locally Lipschitz away from \(y\), and vanishes at
\(r=2L\).

Let \(J(\xi)=e^{|\xi|^2/2}\xi\). Then the exponentially harmonic function $v$ satisfies 
\begin{equation}\label{eq:div-exp}
    \divg (J(\nabla v))=0.
\end{equation}
Since \(|\nabla r|=1\) almost everywhere away from \(y\),
\begin{equation}\label{eq:barrier-flux}
J(\nabla h_Q)=-Qr^{-k}e^{-br}\nabla r.
\end{equation}
For \(K>0\), set \(\kappa=\sqrt{K/(n-1)}\).
The standard Laplacian comparison bound is
\[
\Delta r\leq(n-1)\kappa\coth(\kappa r).
\]
Using \(\coth t\leq1+t^{-1}\) and \(b=\sqrt{K(n-1)}\), we obtain
\begin{equation}\label{eq:laplacian-comparison}
\Delta r\leq\frac{n-1}{r}+\sqrt{K(n-1)}=\frac{n-1}{r}+b
\end{equation}
away from \(y\), in the weak sense.
The case \(K=0\) follows directly.

Set
\[
G_Q(r)=\int_r^{2L}Qt^{-k}e^{-bt}\,dt.
\]
Then \(J(\nabla h_Q)=\nabla(G_Q\circ r)\) almost everywhere.
Moreover,
\[
G_Q'(t)=-Qt^{-k}e^{-bt},\qquad
G_Q''(t)=Qt^{-k}e^{-bt}\left(\frac{k}{t}+b\right).
\]
Since \(G_Q'\leq0\), we apply the weak Laplacian comparison
\cite[Theorem~3.6]{Wei} to obtain
\begin{equation}\label{eq:barrier-subsolution}
\begin{split}
\divg J(\nabla h_Q)
&=\Delta(G_Q\circ r)\\
&\geq G_Q''(r)
 +\left(\frac{n-1}{r}+b\right)G_Q'(r)\\
&=(k-n+1)Qr^{-k-1}e^{-br}\geq0
\end{split}
\end{equation}
on \(B_{2L}(y)\setminus\{y\}\), in the distributional sense.

The value $f_Q(0)=\int_0^{2L}a^{-1}(Qt^{-k}e^{-bt})\,dt$ is continuous and
strictly increasing in \(Q\). It tends to zero as \(Q\downarrow0\)
by \eqref{eq:profile-height} and to infinity as \(Q\to\infty\),
since
\begin{equation}\label{eq:profile-lower}
f_Q(0)\geq L a^{-1}\!\left((2L)^{-k}e^{-2bL}Q\right).
\end{equation}
Hence for any fixed \(0<\alpha<1\), we can choose \(Q>0\) so that
\begin{equation}\label{eq:height-choice}
f_Q(0)=\alpha v(y).
\end{equation}
We claim that \(h_Q=f_Q\circ r\leq v\) on \(B_{2L}(y)\).
By \eqref{eq:height-choice}, we have \(h_Q<v\) near \(y\).
Also, \(h_Q=0<v\) on the outer boundary, if nonempty.
Thus \(\psi=(h_Q-v)_+\) has compact support away from \(y\)
and the outer boundary. Testing \eqref{eq:barrier-subsolution} and the equation \eqref{eq:div-exp} for \(v\) with \(\psi\), we have
\[
\int_{B_{2L}(y)}
\ip{J(\nabla h_Q)-J(\nabla v)}{\nabla\psi}\dd\leq0.
\]
For tangent vectors \(\xi,\zeta\) at the same point,
\[
\ip{J(\xi)-J(\zeta)}{\xi-\zeta}\geq|\xi-\zeta|^2,
\]
since \(DJ(\xi)=e^{|\xi|^2/2}(I+\xi\otimes\xi)\geq I\).
We obtain
\[
\int_{B_{2L}(y)}|\nabla\psi|^2\dd=0.
\]
Since \(\psi\) vanishes near \(y\), we conclude that
\(\psi\equiv 0\). Thus \(h_Q\leq v\) on \(B_{2L}(y)\).

Since \(d_g(y,z)<L\), evaluating at \(z\) yields
\[
v(z)\geq h_Q(z)
\geq\int_L^{2L}a^{-1}(Qt^{-k}e^{-bt})\,dt
\geq L a^{-1}\!\left((2L)^{-k}e^{-2bL}Q\right).
\]
Applying the function \(a\) gives
\[
Q\leq(2L)^k e^{2bL}a\!\left(\frac{v(z)}L\right).
\]
Together with \eqref{eq:profile-height} and
\eqref{eq:height-choice}, this implies
\[
\alpha v(y)\leq2I_k e^{2bL/k}L
       \left[a\!\left(\frac{v(z)}L\right)\right]^{1/k}.
\]
Letting \(\alpha \uparrow 1\), and then taking the supremum over \(y \in B_L(z)\), proves \eqref{eq:positive-bound}.
\end{proof}

\begin{proof}[Proof of Theorem~\ref{thm:one-sided}]
Suppose first that \(u\geq c\) on \(B_{4R}(p)\).
Fix \(x\in B_R(p)\), and set
\[
\ell=\begin{cases}
R,&K=0,\\
\min\{R,K^{-1/2}\},&K>0.
\end{cases}
\]
Then \(K\ell^2\leq1\) and
\(B_{3\ell}(x)\subset B_{4R}(p)\).
For \(\varepsilon>0\), the function
\(v=u-c+\varepsilon\) is positive and exponentially harmonic. According to Lemma~\ref{lem:positive-bound}, we have
\[
\sup_{B_\ell(x)}v
\leq C_n\ell
       \left[a\!\left(\frac{v(x)}\ell\right)\right]^{1/k},
\]
since
\(b\ell=\sqrt{K(n-1)}\ell\leq\sqrt{n-1}\). Applying Proposition~\ref{prop:small-ball} with \(\rho=\ell/4\) and using \(v>0\), we obtain
\begin{align*}
|\nabla u(x)|
&=|\nabla v(x)|
\leq\frac{C_n}{\ell}\osc_{B_\ell(x)}v\\
&\leq\frac{C_n}{\ell}\sup_{B_\ell(x)}v
\leq C_n\left[a\!\left(\frac{v(x)}\ell\right)\right]^{1/k}
=C_n\Psi_k\!\left(\frac{v(x)}\ell\right).
\end{align*}
Now use
\[
\ell^{-1}=\max\{R^{-1},\sqrt K\}
\leq \frac{1}{R}+\sqrt K.
\]
Letting \(\varepsilon\downarrow0\) yields the required gradient estimate \eqref{eq:one-sided-gradient}.
If \(u\leq c\), we simply apply the result to \(-u\).
\end{proof}

\section{Liouville theorems}\label{sec:liouville}

As a consequence of the oscillation estimate, we first consider the sublinear exponentially harmonic functions.
This also gives a proof of the bounded Liouville theorem.

\begin{proposition}\label{prop:oscillation-liouville}
Suppose that \(M\) is complete and connected, and \(\Ric_g\geq0\).
Let \(u\in C^3(M)\) be exponentially harmonic.
If, for some \(p\in M\),
\begin{equation}\label{eq:sublinear}
\osc_{B_R(p)}u=o(R)\qquad\text{as }R\to\infty,
\end{equation}
then \(u\) is constant. In particular, every bounded exponentially
harmonic function on \(M\) is constant.
\end{proposition}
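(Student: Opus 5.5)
The plan is to apply Theorem~\ref{thm:gradient} with \(K=0\) on balls centered at \(p\) and let the radius tend to infinity. We fix an arbitrary point \(x\in M\). For every \(R>d_g(p,x)\) we have \(x\in B_R(p)\). Since \(u\in C^3(M)\) is continuous, it is bounded on the relatively compact ball \(B_{4R}(p)\), so the hypothesis \(u\in L^\infty(B_{4R}(p))\) of Theorem~\ref{thm:gradient} holds. With \(K=0\), Theorem~\ref{thm:gradient} gives
\[
|\nabla u(x)|\leq\sup_{B_R(p)}|\nabla u|\leq\frac{C_n}{R}\osc_{B_{4R}(p)}u
=4C_n\,\frac{\osc_{B_{4R}(p)}u}{4R}.
\]

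By \eqref{eq:sublinear}, \(\osc_{B_{4R}(p)}u/(4R)\to0\) as \(R\to\infty\). Letting \(R\to\infty\) therefore yields \(\nabla u(x)=0\). Since \(x\) was arbitrary, \(\nabla u\equiv0\) on \(M\), and connectedness of \(M\) shows that \(u\) is constant.

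For the final assertion, suppose \(|u|\leq C\) on \(M\). Then \(\osc_{B_R(p)}u\leq 2C=o(R)\) for any \(p\), so the first part applies.

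There is no serious obstacle. The only points to check are that the oscillation on \(B_{4R}(p)\), rather than on \(B_R(p)\), is controlled by \eqref{eq:sublinear}, which is immediate after rescaling \(R\) to \(4R\), and that the \(L^\infty\) hypothesis is met on each ball, which follows from continuity and compactness.
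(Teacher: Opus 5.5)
Your proof is correct and matches the paper's argument essentially step for step. You apply Theorem~\ref{thm:gradient} with \(K=0\) on \(B_{4R}(p)\), justify the \(L^\infty\) hypothesis by continuity and compactness of balls (which holds by completeness), let \(R\to\infty\) using the sublinear oscillation after the harmless rescaling \(R\mapsto4R\), and deduce the bounded case from \(\osc\leq2\sup|u|\).
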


\begin{proof}
Fix \(x\in M\), and take \(R>d_g(p,x)\).
Completeness implies that \(u\) is bounded on each ball
\(B_{4R}(p)\).
With \(K=0\), Theorem~\ref{thm:gradient} gives
\[
|\nabla u(x)|\leq\frac{C_n}{R}\osc_{B_{4R}(p)}u.
\]
By \eqref{eq:sublinear}, the right-hand side tends to zero as
\(R\to\infty\). Thus \(\nabla u=0\), and connectedness implies
that \(u\) is constant. If \(u\) is bounded, then
\[
\osc_{B_R(p)}u\leq2\|u\|_{L^\infty(M)}=o(R),
\]
so the same conclusion holds.
\end{proof}

\begin{remark}
The bounded case of Proposition~\ref{prop:oscillation-liouville}
is a Riemannian special case of Shen's Liouville theorem
on Finsler manifolds \cite[Theorem~1.1]{Shen}.
Our proof is different.
\end{remark}

Next, we use the one-sided estimate to control one-sided growth, namely Theorem~\ref{thm:liouville}.

\begin{proof}[Proof of Theorem~\ref{thm:liouville}]
Assume first that
\(u(p)-\inf_{B_R(p)}u=o(R)\).
Fix \(x\in M\). For \(R>d_g(p,x)\), set
\[
c_R=\inf_{B_{4R}(p)}u.
\]
This number is finite because \(u\) is continuous and
\(\overline{B_{4R}(p)}\) is compact.
With \(K=0\), Theorem~\ref{thm:one-sided} gives
\[
|\nabla u(x)|\leq C_n\Psi_k\!\left(\frac{u(x)-c_R}{R}\right).
\]
We also have
\[
\frac{u(x)-c_R}{R}
=\frac{u(x)-u(p)}R+\frac{u(p)-c_R}R\longrightarrow0
\]
as \(R\to\infty\). Since \(\Psi_k\) is continuous and \(\Psi_k(0)=0\), we get
\(\nabla u(x)=0\). The point \(x\) was arbitrary, so \(u\) is
constant. The upper growth condition follows by applying the same
argument to \(-u\).
\end{proof}

\begin{proof}[Proof of Corollary~\ref{cor:bounded}]
If \(u\geq c\) on \(M\), then
\[
0\leq u(p)-\inf_{B_R(p)}u\leq u(p)-c=o(R).
\]
Theorem~\ref{thm:liouville} therefore applies.
If \(u\) is bounded above, apply the result to \(-u\).
\end{proof}

\begin{remark}\label{rem:linear-growth}
The sublinear assumption in Theorem \ref{thm:liouville} is sharp. In fact, on the Euclidean space, every affine function
\(u(x)=a\cdot x+b\) solves \eqref{eq:eh}.
For \(a\neq0\),
\[
|\nabla u|=|a|,\qquad
u(0)-\inf_{B_R(0)}u=\sup_{B_R(0)}u-u(0)=|a|R.
\]
Also, \(\osc_{B_R(0)}u=2|a|R\).
Thus linear growth on either side allows nonconstant solutions.
\end{remark}

\end{document}